\providecommand{\U}[1]{\protect\rule{.1in}{.1in}}
\newtheorem{theorem}{Theorem}
\newtheorem{lemma}{Lemma}
\newtheorem{remark}{Remark}
\newenvironment{proof}[1][Proof]{\noindent\textbf{#1.} }{\ \rule{0.5em}{0.5em}}
\DeclareRobustCommand{\lcroof}[1]{
  \hbox{\vtop{\vbox{%
      \hrule\kern 1pt\hbox{%
        $\scriptstyle #1$%
        \kern 1pt}}\kern1pt}%
    \vrule\kern1pt}}
\begin{document}

\author{Weihong Ni $^1$, Corina Constantinescu $^2$, Alfredo D.~Eg\'{\i}dio dos Reis $^3$ \\
	\& V\'eronique Maume-Deschamps $^{4,}$ \thanks{Authors thank Fundaci\'on MAPFRE, through 2016 Research Grants Ignacio H. de Larramendi, for the financial support given to the project [{\it "TAPAS (Technology Advancement on Pricing Auto inSurance)"}].
%\\
}
\thanks{Authors $(^1)$,$(^3)$ gratefully acknowledge support from FCT/MEC (Funda\c c\~ao para a Ci\^encia e a Tecnologia/Portuguese Foundation for Science and Technology) through national funds and when applicable co-financed financed by FEDER, under the Partnership Agreement PT2020}.
%\\
\thanks{Authors $(^1)$,$(^4)$ gratefully acknowledge support by the LABEX MILYON (ANR-10-LABX-0070) of Universit\'{e} de Lyon, within the program "Investissements d'Avenir" (ANR-11-IDEX- 0007) operated by the French National Research Agency (ANR)} .
}

\title{Estimation of foreseeable and unforeseeable risks in motor insurance}
\date{}
%%%
%%%
\maketitle
\begin{center}
	%\vspace{0.5cm}
	{\it
		$^1$ CEMAPRE, ISEG, Universidade de Lisboa; wni.grace@gmail.com 
	\\
		$^2$  Institute for Financial and Actuarial Mathematics, Department of Mathematical Sciences, University of Liverpool, UK; c.constantinescu@liverpool.ac.uk
		\\
		$^3$ ISEG and CEMAPRE, Universidade de Lisboa, Pt;
		alfredo@iseg.ulisboa.pt
		\\
		$^4$ Institut Camille Jordan,
		Institut de Science Financi\`ere et d’Assurances (ISFA); Universit\'e Claude Bernard Lyon 1, Lyon, Fr, veronique.maume@univ-lyon1.fr
		\\
	}
\end{center}
\vspace{0.5cm}

\begin{abstract}
This project works with the risk model developed by  \cite{li2015risk} and quests modelling, estimating  and pricing insurance for risks brought in by innovative technologies, or other emerging or latent risks. The model considers two different risk streams that arise together, however not clearly separated or observed. Specifically, we consider a risk surplus process where premia are adjusted according to past claim frequencies, like in a Bonus-Malus (BM) system, when we consider a \textit{classical} or \textit{historical risk} stream and an \textit{unforeseeable risk} one. These are unknown risks which can be of high uncertainty that, when pricing insurance (ratemaking and experience rating), suggest a sensitive premium adjustment strategy. It is not clear for the actuary to observe which claim comes from one or the other stream. When modelling such risks it is crucial to estimate the behaviour of such claims, occurrence and their severity. Premium calculation must fairly reflect the nature of these two kinds of risk streams. 

We start proposing a model, separating claim counts and severities, then propose a premium calculation method, and finally a parameter estimation procedure. In the modelling we assume a Bayesian approach as used in credibility theory, a credibility approach for premium calculation and the use of the Expectation-Maximization (EM) algorithm in the estimation procedure. 

\ \\

{\bf Keywords}: Mixed Poisson processes; Foreseeable risks; Unforeseeable risks; Bayesian estimation; Ratemaking; Experience rating; Credibility; Bonus-malus; EM algorithm.
\end{abstract}

\section{Introduction}
This project aims at developping and also applying statistical tools for parameter estimation into the model introduced by \cite{li2015risk}, where the so-called unforeseeable risks were taken into account. These risks refer to those that are not really predictable due to presence of very little knowledge about them. With the ever smarter way of living, the evaluation of risks could probably be different from classical criteria. For instance, the introduction of autonomous cars would possibly change the magnitude of risks on roads, yet we do not know in which direction precisely, although they were designed to reduce \textit{human error}. To model such unforeseeable feature, a defective random variable was incorporated in the risk model by \cite{li2015risk}. There, unforeseeable risks were reflected in claim frequencies only. Letting premium adjust according to claim numbers imitates the operation of a Bonus-malus system (briefly, BM system or BMS). This project takes the above cited idea, develops their work on the claim frequencies topic, incorporates the two risk stream proposal into the claim severity, add appropriate premium calculation and estimation, and works an estimation procedure, for the distribution parameters of the claim counts and severities, as well as for the premium estimation. For the premium calculation/estimation credibility theory is used, parameter estimation procedure uses the Expectation-Maximization algorithm (briefly EM algorithm). 

EM algortithm is developped in \cite{dempster1977maximum}, a \textit{guided tour} can be found in \cite{couvreur1997algorithm}. We will be using mixture distributions, particular algorithm application for these can be found in 
\cite{redner1984mixture}. For premium calculation we will be using Bayesian credibility methods. Introductory notions on credibility can be found in \cite{klugman2012loss}, Chapters~17-19, and more advanced in \cite{buhlmann2006course}, completely devoted to credibility theory and applications. 

The output of the project should demonstrate a more reasonable pricing model, especially applied for BMS, taking into account those uncertainties (e.g.~emerging risk estimations). By embedding the premium adjustment rules in a classical collective insurance risk model, some new risk measurements will be introduced, so that they can be used in risk management. The work provides a theoretically structured method.

%\textcolor{red}
{
The insurance risk process we base our developments is that of defined by Equation~(3) by \cite{li2015risk},  retrieved from \cite{dubey77}, where the Poisson parameter $\lambda$ is a realization of a random variable $ \Lambda $. From there, the premium income along time uses a Bayesian approach and is estimated by the posterior mean $\hat{\lambda}(t)= \mathbb{E}[\Lambda|N(t)] $ (it is one of \cite{dubey77}'s proposed estimates). We mean, the risk process is driven by equation 
\begin{equation}
U(t)=u+c \int_{0}^{t} \hat{\lambda}(s)ds -S(t) \,, \; \; t\geq 0\,, 
\label{eq:model2}
\end{equation}
where, $ S(t)=\sum^{N(t)}_{j=0}Y_j $ is the aggregate claims up to time $t$, $ u=U(0) $ is the initial surplus,  $ \{Y_j\}_{j=1}^{\infty} $ is a sequence of independent and idenciatlly distributed (briefly, i.i.d$ . $) random variables with common distribution $H_Y(.)$, with existing mean $\mu=\mathbb{E}[Y_1]$ and $Y_0 \equiv 0$.  The premium rates are dynamically adjusted, the underlying counting process $ \{N(t), t\geq 0\} $ is a mixed Poisson process with random itensity $ \Lambda$. The other premium comonent, $c$, denotes the time constant part of the premium income, and often defining a loading as a proportion of the pure premium, here $c=(1+\theta)\mathbb{E}[Y_1]$, 
where $\theta$ $(>0)$ is the loading coefficient. 
} 

Another \cite{dubey77}'s neat idea that taken by \cite{li2015risk} was to consider a positive probability  $\mathbb{P}[\Lambda=0]=p\,\, (>0)$, so that the counting process $ \{N(t)\} $ is a mixed Poisson process conditional on $\Lambda>0$. The latter authors generalised to consider that the process $ \{N(t)= N^{(1)}(t)+ N^{(2)}(t)\}$ is a sum of two mixed counting processes, where $ \{N^{(1)}(t)\}$ represents the \textit{historical risk stream} of claim counts and  $ \{N^{(2)}(t)\}$ represents the corresponding \textit{unforeseeable risk stream}. This is the process that we recapture for further developments.  Concerning the first process the randomized intensity parameter has a classical behaviour, it is positive, or  
$\mathbb{P}\{\Lambda^{(1)} = 0\} = 0$, whereas in the second process
and we set
$\mathbb{P}\{\Lambda^{(2)} = 0\} = p>0.$
The first one is assigned to a \textit{historical} risk behaviour, and the second  to an \textit{unforeseeable} risk one. Putting this idea into (\ref{eq:model2}),
\cite{li2015risk} set $\hat{\lambda}(t)= \mathbb{E}[\Lambda^{(1)}+\Lambda^{(2)}|N(t)] $.

Modelling in this way is assuming that the second stream may be either not as risky as the classical one, or much riskier. If we think of the autonomous cars, experiments to date have suggested that they are potentially safer than human driving (Waymo Team, 2016\nocite{waimo216}). On the other hand, if we think of the asbestos problem it may be the other way around. Not much interpretation is given to the situation where $p>0$. About the possible outbreak of unforeseeable clims, we quote \cite{li2015risk} at the start of their Section~2 where they say that
\begin{quotation}
\textit{...once they broke out in a negative way, it would possibly be too late for an insurance company to control the losses... the expectation of $\Lambda$ conditioning on $\{\Lambda >0\}$ is very large and practically it could be assumed that it could be much more than the average number of claims in the ``historical'' stream.}
\end{quotation} 

Modelling two different streams of risks for the same portfolio can be done exclusively on the claim count process like \cite{dubey77} and \cite{li2015risk} did, or also in the claim severity if we consider dependence between $ \{ N(t), t\geq 0 \} $
and sequence $ \{ Y_j \}^{\infty}_{j=1} $. This latter means that the different streams may (also) \textit{bring} different severity behaviour. This is not dealt by the previous authors. We start dealing with the model as the above authors did, we mean work first the claim frequency component separately. \cite{li2015risk} were more concerned in ruin probability calculation, embodying the effect of the unforesseable stream in the claim counts only.

The manuscript is organised as follows. In the next section we work the claim frequency component of the model, in Section~\ref{s:sev_comp} we complete the starting model with the introduction of claim severity, since we admit that the severity may bring some information on the stream origin. In Section~\ref{sec:Bayesian} we deal with the premium calculation/estimation, where a Bayesian credibility approch is used. Section~\ref{s:estima} is devoted to parameter estimation where the EM algortithm is used, dealing separately the claim count and severity estimation. We finish  the section by adding some discussion on the estimation procedure and results. Our manuscript is finished inserting some notes on how to do future estimation by setting a \textit{global} likelihood in order to estimate all parameters together, from both claim frequency and severity distributions, regarding the application of estimation algorithms like the EM algorithm.
\section{Claim frequency component} \label{s:claim_comp}
The total claim numbers over a time period $(0,t]$ within the portfolio is given by 
\begin{equation}
\label{eq:jointprocess}
N(t) = N^{(1)}(t)+N^{(2)}(t)\,.
\end{equation}
Due to the randomness in their intensity parameters, we know that $\{N^{(1)}(t)\}$ and $\{N^{(2)}(t)\}$ are both mixed counting processes. More specifically, we consider now that $\{N^{(1)}(t)\}$ is a mixed Poisson process, whereas $\{N^{(2)}(t)\}$ is mixed Poisson conditional on $\{\Lambda^{(2)} > 0\}$, since the intensity must be positive. We consider that the event $\{\Lambda^{(2)} = 0\}$ implies that $\{N^{(2)}(t) = 0\}$, with probability one, like behaving as a limiting situation of a Poisson intensity where the probability
  $$\lim_{\lambda\downarrow 0}\mathbb{P}\{N^{(2)}(t)=0|\Lambda=\lambda\}=\lim_{\lambda\downarrow 0}e^{-\lambda\,t}=1\,.$$ 
It was proved by \cite{li2015risk} [see their Lemma~1] that $\{N(t)\}$ is also a mixed Poisson process with intensity $\Lambda^{(1)}+\Lambda^{(2)}$ given that $N^{(1)}(t)$ and $N^{(2)}(t)$ are independent. Under this global process, $\{N(t)=N^{(1)}(t)+N^{(2)}\}$, we have that
when a claim arrives it comes either from process $ \{N(t)^{(1)}\}$,   
or 
from process 
$\{N(t)^{(2)}\}$. We remark that in practice actuaries will mostly observe the realization of the global claim process, which means they'll have to \textit{guess} which process the arriving claim belongs to. 

Let's define and denote $\Xi$ as the \textit{split rate}, in favour of process $\{N(t)^{(1)}\}$ in $\{N(t)\}$, between the two previous processes. It is well known that for independent Poisson processes, in our case for given and positive rates $\Lambda^{(1)}=\lambda^{(1)}$ and $\Lambda^{(2)}=\lambda^{(2)}>0$, we have that the \textit{split} between processes ``(1)'' and ``(2)'' in the combined process is going to be given by the probabilities, respectively as 
\begin{eqnarray}
\label{eq:epsilon}	
\xi&=&\dfrac{\lambda^{(1)}}{\lambda^{(1)}+\lambda^{(2)}} \;\;\;\, % \\ 
 \text{and} \;\;\;\,	(1- \xi) %&
 = %&
 \dfrac{\lambda^{(2)}}{\lambda^{(1)}+\lambda^{(2)}}\,. %\nonumber
\end{eqnarray}
 Unconditionally, considering that $\xi$ is particular outcome of the random variable $\Xi$, then we can write, \textit{extending} to the unconditional mixed process as defined in \eqref{eq:jointprocess}: 
 \begin{eqnarray}	
 \label{eq:Xi}
 \Xi&=&\dfrac{\Lambda^{(1)}}{\Lambda^{(1)}+\Lambda^{(2)}} \,.  % \\ 
  \end{eqnarray}
 Note that
  \begin{equation}
  \label{eq:probxi1}
 \{\Lambda^{(2)}= 0\} \iff \{\Xi=1\}  \Rightarrow \mathbb{P}[\Lambda=0]=\mathbb{P}[\Xi=1]=p\, .
 \end{equation}
 We will return to this issue later in the text. We denote the distribution function of $\Xi$ as $A_\Xi(.)$. 
 
 We will now set further assumptions. From now onwards, we assume assume that the Poisson intensities follow gamma distributions: $\Lambda^{(1)}\sim Gamma(\alpha_1, \beta_1)$ and, conditional on $ \Lambda^{(2)}>0 $, $\Lambda^{(2)}|\Lambda^{(2)}>0\sim Gamma(\alpha_2, \beta_2)$. For simplicity, in the sequel we denote $\Lambda^{(2)}_+=\Lambda^{(2)}|\Lambda^{(2)}>0$. Also, without loss of generality, we will be working with the random variable $N(1)=N$, denoting $N$ for simplicity. Likewise with related quantities, such as $N^{(1)}(1)=N^{(1)} $ and $ N^{(2)}(1)=N^{(2)} $.

Under a more restrict assumption for the gamma distributions above, we can arrive at the following result:
\begin{lemma} \label{l:mix_nb}
If $\beta_1 = \beta_2 = \beta$, then $N(1)=N$ is a mixture of two Negative Binomial random variables, such that 
$$\mathbb{P}(N = n) = p\binom{n+\alpha_1-1}{n}\left(\frac{\beta}{\beta + 1}\right)^{\alpha_1}\left(\frac{1}{\beta + 1}\right)^n + (1-p)\binom{n+\alpha_1+\alpha_2-1}{n}\left(\frac{\beta}{\beta + 1}\right)^{\alpha_1+\alpha_2}\left(\frac{1}{\beta + 1}\right)^n.$$
\end{lemma}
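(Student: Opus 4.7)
The plan is to split the analysis according to whether the unforeseeable intensity vanishes, and in each case recognise $N$ as a classical gamma–Poisson mixture. By the law of total probability,
\begin{equation*}
\mathbb{P}(N=n) = p\,\mathbb{P}(N=n\mid \Lambda^{(2)}=0) + (1-p)\,\mathbb{P}(N=n\mid \Lambda^{(2)}>0).
\end{equation*}
On the event $\{\Lambda^{(2)}=0\}$ the assumption $N^{(2)}\equiv 0$ yields $N=N^{(1)}$, which is a mixed Poisson with intensity $\Lambda^{(1)}\sim Gamma(\alpha_1,\beta)$. On the event $\{\Lambda^{(2)}>0\}$, since $N=N^{(1)}+N^{(2)}$ with $N^{(1)},N^{(2)}$ conditionally independent Poisson random variables given their intensities, Lemma~1 of \cite{li2015risk} (cited in the text) guarantees that $N$ is again a mixed Poisson, now with random intensity $\Lambda^{(1)}+\Lambda^{(2)}_+$.

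The second step is to identify the two mixing distributions. Since $\Lambda^{(1)}$ and $\Lambda^{(2)}_+$ are independent gammas with a \emph{common} rate parameter $\beta$, the standard convolution property of gamma laws gives
\begin{equation*}
\Lambda^{(1)}+\Lambda^{(2)}_+ \sim Gamma(\alpha_1+\alpha_2,\beta).
\end{equation*}
This is the place where the hypothesis $\beta_1=\beta_2=\beta$ is essential; without it, the sum would be a generalised integral (hyper-exponential type) that is not gamma and the conclusion would fail.

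The third step is the well-known gamma–Poisson computation. For any shape $\alpha$ and rate $\beta$,
\begin{equation*}
\int_0^\infty e^{-\lambda}\frac{\lambda^n}{n!}\cdot \frac{\beta^{\alpha}}{\Gamma(\alpha)}\lambda^{\alpha-1}e^{-\beta\lambda}\,d\lambda
= \binom{n+\alpha-1}{n}\left(\frac{\beta}{\beta+1}\right)^{\alpha}\left(\frac{1}{\beta+1}\right)^n,
\end{equation*}
by recognising the inner integral as $\Gamma(n+\alpha)/(\beta+1)^{n+\alpha}$. Applying this once with $\alpha=\alpha_1$ and once with $\alpha=\alpha_1+\alpha_2$ and substituting into the total-probability decomposition delivers exactly the claimed mixture expression.

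The only delicate point is the convolution in the second step — it is routine but hinges on the common-rate hypothesis — and the careful handling of the atom $\{\Lambda^{(2)}=0\}$, which must be treated separately rather than absorbed into the density of $\Lambda^{(2)}$. Everything else is direct substitution.
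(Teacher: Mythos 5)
Your proof is correct, but it follows a different route from the paper's. The paper proves this lemma by factorising the moment generating function: $M_N(\rho)=M_{N^{(1)}}(\rho)M_{N^{(2)}}(\rho)$, where $N^{(1)}$ is Negative Binomial$(\alpha_1,\beta)$ and $N^{(2)}$ is a zero-modified Negative Binomial with mgf $p+(1-p)\bigl(\tfrac{1-\frac{1}{1+\beta}}{1-e^\rho\frac{1}{1+\beta}}\bigr)^{\alpha_2}$; expanding the product immediately exhibits the $p$/$(1-p)$ mixture of two Negative Binomial mgfs. You instead condition on the atom $\{\Lambda^{(2)}=0\}$ versus $\{\Lambda^{(2)}>0\}$, use the common-rate gamma convolution $\Lambda^{(1)}+\Lambda^{(2)}_+\sim Gamma(\alpha_1+\alpha_2,\beta)$, and finish with the standard gamma--Poisson integral. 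This is essentially the content of the paper's Lemma~2 and the remark following it (where $\Lambda=\Lambda^{(1)}+(1-I)\Lambda^{(2)}_+$ is shown to have the gamma-mixture prior \eqref{eqn:gammamix}), reached constructively rather than by positing the mixture prior. What each buys: the mgf argument is shorter and displays the mixture weights purely algebraically, while your decomposition makes transparent both where the hypothesis $\beta_1=\beta_2$ enters (the convolution step) and why the mixture weights are exactly $p$ and $1-p$ (they are the probabilities of the atom), and it anticipates the equivalence between the two constructions that the paper establishes separately. One small point to make explicit: when you condition on $\{\Lambda^{(2)}>0\}$ or $\{\Lambda^{(2)}=0\}$ you are using the independence of $\Lambda^{(1)}$ and $\Lambda^{(2)}$, so that the law of $\Lambda^{(1)}$ is unchanged and $\Lambda^{(1)}$, $\Lambda^{(2)}_+$ remain independent; this is implicit in the paper's independence assumption on the two streams but deserves a sentence in your write-up.
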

\begin{proof}
Let $ N^{(1)}(1)=N^{(1)} $ and $ N^{(2)}(1)=N^{(2)} $, for simplicity. Taking the moment generating function (briefly, mgf) of $N$, as function of $\rho$,  yields
\begin{eqnarray*}
M_N(\rho) &=& \mathbb{E}\left[e^{\rho\,(N^{(1)}+N^{(2)})}\right] = M_{N^{(1)}}(\rho)M_{N^{(2)}}(\rho)\\
&=&\left(\frac{1-\frac{1}{1+\beta}}{1-e^\rho \frac{1}{1+\beta}}\right)^{\alpha_1}\left[p+(1-p)\left(\frac{1-\frac{1}{1+\beta}}{1-e^\rho \frac{1}{1+\beta}}\right)^{\alpha_2}\right]\\
&=&p\left(\frac{1-\frac{1}{1+\beta}}{1-e^\rho \frac{1}{1+\beta}}\right)^{\alpha_1}+(1-p)\left(\frac{1-\frac{1}{1+\beta}}{1-e^\rho \frac{1}{1+\beta}}\right)^{\alpha_1+\alpha_2}.
\end{eqnarray*}
We know that this corresponds to a weighted average of two Negative Binomial distributions, with weights $p$ and $1-p$ respectively. \hfill{}
\end{proof}
\begin{remark}
Note that $N^{(2)}$ follows a `Zero Modified' Negative Binomial distribution, briefly, $N^{(2)}\frown ZM\,\, Negative\text{ } Binomial (\alpha_2,\beta)$. It belongs to the  $(a,b,1)$ recursion class of distributions, see Section~6.6 of \cite{klugman2012loss}. Since
\begin{eqnarray*}
M_{N^{(2)}}(\rho) &=
& \mathbb{E}\left[e^{N^{(2)}}|\Lambda^{(2)}=0\right]\times \mathbb{P}\left[ \Lambda^{(2)}=0
 \right]	
+ \mathbb{E}\left[e^{N^{(2)}}|\Lambda^{(2)}>0\right]\times \mathbb{P}\left[ \Lambda^{(2)}>0\right]\\
 &=
& p+ (1-p) \left(\frac{1-\frac{1}{1+\beta}}{1-e^\rho \frac{1}{1+\beta}}\right)^{\alpha_2}\,,
\end{eqnarray*}
which is a weighted average of the mgf's of  a degenerate distribution at $\{0\}$ and a Negative Binomial, member of the $(a,b,0)$ class. \hfill{$\Box$}
\end{remark}

Alternatively, instead of focusing on the claim arrival process $\{N(t)\}$ which consists of combining two counting processes, $\{N^{(1)}(t)\}$ and $\{N^{(2)}(t)\}$, as explained above, we could model the underlying claim counts via mixing random variables, mixing two Gamma random variables with an independent Bernoulli random variable.
\begin{lemma} 
Assume that the randomized Poisson parameter $\Lambda$ follows a prior distribution as a mixture of two Gamma random variables, such that
\begin{equation}
\Lambda = I\, Z_1 + (1-I)\, Z_2,
\end{equation}
where $Z_1\sim Gamma(\alpha_1, \beta)$ and $Z_2\sim Gamma(\alpha_1+\alpha_2, \beta)$ and $I$ is a Bernoulli$(p)$ random variable independent of $Z_1$ and $Z_2$.
Then the marginal distribution of $N(t) = N$ for a fixed $t$ is a mixture of two Negative Binomial random variables as shown in Lemma~\ref{l:mix_nb} above. \hfill{}
\label{l:mix_nb2}
\end{lemma}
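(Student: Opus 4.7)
The plan is to compute the distribution of $N$ by conditioning on the Bernoulli mixing variable $I$, and then invoke the standard gamma-mixed-Poisson identity to recognise each component as a Negative Binomial. Since the claim is about the \emph{marginal} law of $N$, I only need to check that the moment generating function (or equivalently the probability mass function) matches the expression given in Lemma~\ref{l:mix_nb}.

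First I would write, by the law of total expectation conditioning on $I$,
\[
M_N(\rho) \;=\; \mathbb{E}\bigl[\mathbb{E}[e^{\rho N}\mid \Lambda]\bigr]
\;=\; p\,\mathbb{E}\bigl[e^{\rho N}\mid I=1\bigr] + (1-p)\,\mathbb{E}\bigl[e^{\rho N}\mid I=0\bigr].
\]
Conditional on $I=1$ we have $\Lambda = Z_1 \sim Gamma(\alpha_1,\beta)$, so $N$ is a mixed Poisson with Gamma mixing distribution, which is the classical construction of the Negative Binomial; its mgf is $\bigl((1-\tfrac{1}{1+\beta})/(1-e^\rho \tfrac{1}{1+\beta})\bigr)^{\alpha_1}$. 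Conditional on $I=0$ we similarly obtain a Negative Binomial mgf with shape $\alpha_1+\alpha_2$. Adding the two weighted pieces gives exactly the mgf displayed in the proof of Lemma~\ref{l:mix_nb}, and by uniqueness of the mgf the distributions coincide.

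As a sanity check I would note the correspondence with the construction in Lemma~\ref{l:mix_nb}: there, $\Lambda^{(1)}+\Lambda^{(2)}$ equals $\Lambda^{(1)}$ on the event $\{\Lambda^{(2)}=0\}$ (probability $p$) and equals the sum of two independent Gammas with common rate $\beta$ on the complement (probability $1-p$), the latter being $Gamma(\alpha_1+\alpha_2,\beta)$ by the usual convolution formula. This matches the representation $\Lambda = I Z_1 + (1-I) Z_2$ in distribution, so the two models are equivalent and must yield the same law for $N$.

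There is essentially no hard step here: the only potential pitfall is making sure one uses the convolution $Gamma(\alpha_1,\beta)+Gamma(\alpha_2,\beta) \stackrel{d}{=} Gamma(\alpha_1+\alpha_2,\beta)$ (common rate $\beta$, which is precisely the restriction $\beta_1=\beta_2=\beta$ imposed in Lemma~\ref{l:mix_nb}), so that the two constructions produce the same mixing law for $\Lambda$. Once that is observed, the result follows immediately from the previous lemma without further computation.
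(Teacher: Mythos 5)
Your argument is correct, but it runs along a slightly different track than the paper's. The paper proves the lemma at the level of probability mass functions: it writes $\mathbb{P}\{N=n\}=\int \frac{e^{-\lambda}\lambda^n}{n!}\,\pi(\lambda)\,d\lambda$ with $\pi$ the two-component gamma mixture \eqref{eqn:gammamix}, splits the integral by linearity, and evaluates each piece to obtain the explicit Negative Binomial mixture pmf of Lemma~\ref{l:mix_nb}. You instead condition on the Bernoulli variable $I$ (which is the probabilistic counterpart of splitting that integral), identify each conditional law $N\mid I$ as a $Gamma$-mixed Poisson, i.e.\ Negative Binomial, and match moment generating functions with those appearing in the proof of Lemma~\ref{l:mix_nb}, concluding by mgf uniqueness (legitimate here, since the mgf exists for $\rho<\log(1+\beta)$, a neighbourhood of the origin). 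Both proofs rest on the same Gamma--Poisson conjugacy; the paper's version has the advantage of producing the pmf explicitly, which is reused later for the posterior and likelihood computations, while yours is shorter and makes the structural equivalence with the two-stream construction transparent --- indeed your closing observation that $\Lambda^{(1)}+\Lambda^{(2)}$ equals $Z_1$ with probability $p$ and $Z_2$ with probability $1-p$ (via the common-rate gamma convolution) is essentially the content of the remark the paper places immediately after this lemma. One cosmetic point: in your first display the inner conditioning is written on $\Lambda$ while the decomposition is by $I$; it would read more cleanly as $\mathbb{E}\bigl[\mathbb{E}[e^{\rho N}\mid I]\bigr]$, using the independence of $I$ from $(Z_1,Z_2)$ to identify the conditional law of $\Lambda$ given $I$. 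This is a notational slip, not a gap.
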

\begin{proof}
Recall that given $\Lambda = \lambda$,
$
\mathbb{P}\{N=n |\Lambda =\lambda\} = {e^{-\lambda}\lambda^n}/{n!}
$.
Now we integrate over $\Lambda$ whose density, denoted as $\pi(.)$, can be written as
\begin{equation}
\label{eqn:gammamix}
\pi(\lambda) = p\cdot\frac{\lambda^{\alpha_1-1}\beta^{\alpha_1} e^{-\beta \lambda}}{\Gamma(\alpha_1)} + (1-p)\cdot\frac{\lambda^{(\alpha_1+\alpha_2)-1}\beta^{\alpha_1+\alpha_2} e^{-\beta \lambda}}{\Gamma(\alpha_1+\alpha_2)}.
\end{equation}
Hence,
\begin{eqnarray*}
\mathbb{P}\{N=n\} &=& \int_\Lambda \frac{e^{-\lambda}\lambda^n}{n!} \left(p\cdot\frac{\lambda^{\alpha_1-1}\beta^{\alpha_1} e^{-\beta \lambda}}{\Gamma(\alpha_1)} + (1-p)\cdot\frac{\lambda^{(\alpha_1+\alpha_2)-1}\beta^{\alpha_1+\alpha_2} e^{-\beta \lambda}}{\Gamma(\alpha_1+\alpha_2)}\right)\,\rm d\lambda\\
&=& p \cdot\int_\Lambda \frac{e^{-\lambda}\lambda^n}{n!}\frac{\lambda^{\alpha_1-1}\beta^{\alpha_1} e^{-\beta \lambda}}{\Gamma(\alpha_1)}\,\rm d\lambda + (1-p)\cdot\int_\Lambda \frac{e^{-\lambda}\lambda^n}{n!}\frac{\lambda^{(\alpha_1+\alpha_2)-1}\beta^{\alpha_1+\alpha_2} e^{-\beta \lambda}}{\Gamma(\alpha_1+\alpha_2)}\,\rm d\lambda\\
&=&p\binom{n+\alpha_1-1}{n}\left(\frac{\beta}{\beta + 1}\right)^{\alpha_1}\left(\frac{1}{\beta + 1}\right)^n\\
&& + (1-p)\binom{n+\alpha_1+\alpha_2-1}{n}\left(\frac{\beta}{\beta + 1}\right)^{\alpha_1+\alpha_2}\left(\frac{1}{\beta + 1}\right)^n \,.
\end{eqnarray*}
This distribution coincides with the one shown in Lemma~\ref{l:mix_nb}. \hfill{}
\end{proof}

The above enhances the fact that the distribution for claim counts $N(t)$ over a fixed period $t$ is equivalent to a mixing over  a Poisson with Gamma mixtures. In this way, under a Bayesian set-up, considering a prior distribution of Gamma mixtures, we will be able to compute the posterior distribution. This is important for computing estimates aiming experience rating. We show this in Section~\ref{sec:Bayesian}.
 %This is actually a special case of Corollary \ref{teor:conjmix}, because the underlying prior is a mixture of two Gamma random variables who belong to the two-parameter exponential family.
%
In fact, the equivalence of the previous two constructions can be explained further. Consider the following remark.
\begin{remark}
	Let $ Z_1=\Lambda^{(1)} $, $ Z_2 = \Lambda^{(1)}+\Lambda^{(2)}_+ $, where $\Lambda^{(2)}_+=\Lambda^{(2)}|\Lambda^{(2)}>0$. Also,  define $\Lambda^{(2)} = (1-I)\,\Lambda^{(2)}_+ + I \Lambda^{(2)}_0= (1-I)\,\Lambda^{(2)}_+$, with $\Lambda_0^{(2)} = 0$ and $I\sim \text{Bernoulli}(p)$. 
	We have,
\begin{eqnarray*}
\Lambda &=& I\, Z_1 + (1-I)\, Z_2 = I\,\Lambda^{(1)}+(1-I)\,(\Lambda^{(1)}+\Lambda^{(2)}_+) \\
&=& \Lambda^{(1)} + (1-I) \,\Lambda^{(2)}_+ = \Lambda^{(1)}+\Lambda^{(2)}\,.
\end{eqnarray*}
Since $N|\Lambda \sim$ \text{Poisson} $(\Lambda^{(1)}+\Lambda^{(2)})$, it is true that $N = N^{(1)}+N^{(2)}$ where $N^{(1)}|\Lambda^{(1)}\sim  \text{Poisson}(\Lambda^{(1)})$ and $N^{(2)}|\Lambda^{(2)}_+\sim \text{Poisson}(\Lambda^{(2)}_+)$. \hfill{$\Box$}
\end{remark}
%
%
%\section{Parameter estimation}
%
In the model presented so far, we distinguish the \textit{unforeseeable} stream of risks from the \textit{historical} stream by considering two different, and independent, claim counting processes where the randomized intensities are of different nature. 
%
%\item 
For now, these parameters only cause influence in the claim number process, not the claim size. 
However, we should also consider the possibility that the amounts are affected by the risk type. The introduction of a randomness itself in the intensities does not necessarily bring a dependence between the number of claims and sizes. However, as we put together the two risk streams in a global process, this is the one observed, and we consider that the claim size behaviour may depend on the risk stream, we need to admit dependence. This sets us away from the classical risk consideration that claim arrival process is independent of the claim amount one. We may assume the twofold stage: 
\begin{enumerate}
\item On a starting stage, we can consider that for a given $\Lambda^{(i)}$, $i=1,2$, the independence is only (conditional) inside each ($i$). Or,%
\item On a further stage, a model with \textit{extended} dependence.
\end{enumerate} 
The first situation seems quite feasible for our model, if we consider no independence. Our next developments will clear the situation.
We remark that parameters cannot be observed, only claims can. Further, we, as actuaries, only observe the realization of the total claim process $\{N(t)=N^{(1)}(t)+N^{(2)}(t)\}$, from there we'll have to \textit{guess}/estimate which process each claim arrival belongs to. Then, using data and the model as behaving like is defined in Lemma~\ref{l:mix_nb} we have four parameters to estimate, $p$, $\alpha_1$, $\alpha_2$, and $\beta$. 

As defined in \eqref{eq:Xi}, $\Xi$ is viewed as the (random) split rate for process between processes $\{N^{(1)}(t)\}$ and $\{N^{(2)}(t)\}$ in $\{N(t)\}$. Let's consider the event $\{\Lambda^{(1)} + \Lambda^{(2)} = \lambda,\,  \Xi = \xi ,\, 0<\xi<1 \}$ as given, so that $\{N(t)\}$ conditionally is a Poisson process. Consider the following lemma:
\begin{lemma}
	Under $\{\Lambda^{(1)} + \Lambda^{(2)} = \lambda, \Xi = \xi, 0<\xi<1 \}$, the split rate, of process $\{N(t)=N^{(1)}(t)+N^{(2)}(t)\}$ in favour of $\{ N^{(1)}(t) \}$,  is given by $\xi = \frac{\Lambda^{(1)}}{\Lambda^{(1)}+\Lambda^{(2)}}$. 
\end{lemma}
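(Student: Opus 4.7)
The plan is to reduce the statement to the classical thinning/superposition theorem for Poisson processes, applied conditionally on the realizations of the mixing intensities. First I would condition more finely on the event $\{\Lambda^{(1)}=\lambda^{(1)},\, \Lambda^{(2)}=\lambda^{(2)}\}$ with $\lambda^{(1)}+\lambda^{(2)}=\lambda$ and $\lambda^{(1)}/\lambda=\xi$. This refinement is valid because the pair $(\Lambda^{(1)}+\Lambda^{(2)},\Xi)$ and the pair $(\Lambda^{(1)},\Lambda^{(2)})$ generate the same information (on the set $\{0<\Xi<1\}$ the change of variables is a bijection).

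Once conditioning is done, $\{N^{(1)}(t)\}$ and $\{N^{(2)}(t)\}$ are independent homogeneous Poisson processes with constant rates $\lambda^{(1)}$ and $\lambda^{(2)}$ respectively. The superposition $\{N(t)\}$ is then Poisson with rate $\lambda^{(1)}+\lambda^{(2)}=\lambda$. The next step is to compute, for any arrival of the superposed process, the probability that it originated from $\{N^{(1)}(t)\}$. The cleanest way is to write the inter-arrival times of each subprocess as independent exponentials with parameters $\lambda^{(1)}$ and $\lambda^{(2)}$, and use the standard fact that the minimum of two independent exponentials is $\lambda^{(1)}$-exponential with probability $\lambda^{(1)}/(\lambda^{(1)}+\lambda^{(2)})$; alternatively, one can condition on $N(t)=n$ and show via the joint probability
\[
\mathbb{P}\{N^{(1)}(t)=k,\, N^{(2)}(t)=n-k\mid N(t)=n\}=\binom{n}{k}\xi^{k}(1-\xi)^{n-k},
\]
which is a direct computation using the Poisson pmfs and the identity $\lambda^{(1)}/\lambda=\xi$. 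Either argument identifies the conditional split rate as $\xi$.

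Finally, I would remove the conditioning on the particular values $(\lambda^{(1)},\lambda^{(2)})$ and observe that the resulting split rate depends on these only through the ratio $\xi=\lambda^{(1)}/(\lambda^{(1)}+\lambda^{(2)})$; hence on the event $\{\Lambda^{(1)}+\Lambda^{(2)}=\lambda,\, \Xi=\xi,\, 0<\xi<1\}$ the split rate is exactly $\xi$, which by definition \eqref{eq:Xi} equals $\Lambda^{(1)}/(\Lambda^{(1)}+\Lambda^{(2)})$.

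The argument is essentially bookkeeping on top of the classical thinning theorem, so I do not anticipate a real obstacle; the only subtle point is to make sure the conditioning event $\{\Xi=\xi\}$, which has probability zero in general, is handled correctly. I would handle this either by restricting to $0<\xi<1$ (as the statement explicitly does, excluding the atom at $\xi=1$ coming from $\{\Lambda^{(2)}=0\}$) and working with regular conditional distributions, or equivalently by re-parametrising $(\Lambda^{(1)},\Lambda^{(2)})\mapsto(\Lambda^{(1)}+\Lambda^{(2)},\Xi)$ and conditioning on the joint density, so that all conditional probabilities above are well-defined almost surely.
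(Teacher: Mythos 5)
Your proposal is correct, and it rests on the same classical fact as the paper's proof---the superposition/thinning equivalence for Poisson processes, with the binomial conditional split $\binom{n_1+n_2}{n_1}\xi^{n_1}(1-\xi)^{n_2}$ at its core---but you run the argument in the opposite logical direction. The paper conditions on $\{\Lambda^{(1)}+\Lambda^{(2)}=\lambda,\ \Xi=\xi\}$, takes as its starting point the interpretation that $\xi$ is the per-event probability of drawing from the first stream (``Precisely, $\xi$ means the probability of drawing an event from the first stream''), and then factorises the joint conditional law of $(N^{(1)}(t),N^{(2)}(t))$ into two independent Poisson probability functions with means $\lambda\xi t$ and $\lambda(1-\xi)t$, from which it reads off $\lambda^{(1)}=\lambda\xi$, i.e.\ $\xi=\lambda^{(1)}/(\lambda^{(1)}+\lambda^{(2)})$. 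You instead condition on the individual intensities $(\lambda^{(1)},\lambda^{(2)})$---legitimate, since on $\{0<\Xi<1\}$ the map $(\Lambda^{(1)},\Lambda^{(2)})\mapsto(\Lambda^{(1)}+\Lambda^{(2)},\Xi)$ is a bijection---apply the standard superposition/thinning computation (or the exponential-minimum argument) to obtain the binomial split with parameter $\lambda^{(1)}/\lambda$, and identify that parameter with $\xi$. Your direction is arguably the cleaner one: it derives the per-event source probability from the rates rather than postulating the meaning of $\xi$ and matching rates afterwards, and your explicit treatment of the probability-zero conditioning event via reparametrisation or regular conditional distributions addresses a technical point the paper passes over silently. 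What the paper's direction buys is that the same display simultaneously exhibits the conditional independence of $N^{(1)}(t)$ and $N^{(2)}(t)$ and the identification of their conditional rates $\lambda\xi$ and $\lambda(1-\xi)$, which is the form invoked later in the text.
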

\begin{proof}
	Let us assume there are $n=n_1+n_1$ number of arrivals within a fixed time $(0,t]$ where $n_1$ came from the historical stream and $n_2$ arrived from the unforeseeable stream. We could thus write, conditioning on $\{\Lambda^{(1)} + \Lambda^{(2)} = \lambda, \Xi = \xi\}$
	\begin{eqnarray*}
		&& \!\!\!\!\!\! \!\!\!\!\!\! \!\!\!\!\!\! \!\!\!\!\!\!
		\mathbb{P}\left\{N^{(1)}(t) = n_1, N^{(2)}(t) = n_2 \middle| \Lambda^{(1)} + \Lambda^{(2)} = \lambda, \Xi = \epsilon\right\} \\
		&=& \sum_{n = 0}^{\infty}\mathbb{P}\left\{ N^{(1)}(t) = n_1, N^{(2)}(t) = n_2 \middle | N^{(1)}(t) + N^{(2)}(t) = n, \Lambda^{(1)} + \Lambda^{(2)} = \lambda, \Xi = \xi\right\}\\
		&& \;\;\; \times \, \mathbb{P}\left\{N^{(1)}(t) + N^{(2)}(t) = n \middle | \Lambda^{(1)} + \Lambda^{(2)} = \lambda, \Xi = \xi \right\}.
	\end{eqnarray*}
	But $\mathbb{P}\left\{ N^{(1)}(t) = n_1, N^{(2)}(t) = n_2 \middle | N^{(1)}(t) + N^{(2)}(t) = n, \Lambda^{(1)} + \Lambda^{(2)} = \lambda, \Xi = \xi\right\} = 0$ when $n\neq n_1 + n_2$, then equation above continues as
	\begin{eqnarray*}
		&&   \!\!\!\!\!\! \!\!\!\!\!\! \!\!\!\!\!\! \!\!\!\!\!\!
		\mathbb{P}\left\{N^{(1)}(t) = n_1, N^{(2)}(t) = n_2 \middle| \Lambda^{(1)} + \Lambda^{(2)} = \lambda, \Xi = \epsilon\right\} \\
		&=&\mathbb{P}\left\{ N^{(1)}(t) = n_1, N^{(2)}(t) = n_2 \middle | N^{(1)}(t) + N^{(2)}(t) = n_1+n_2, \Lambda^{(1)} + \Lambda^{(2)} = \lambda, \Xi = \xi\right\}\\
		&&\times\, \mathbb{P}\left\{N^{(1)}(t) + N^{(2)}(t) = n_1+n_2 \middle | \Lambda^{(1)} + \Lambda^{(2)} = \lambda, \Xi = \xi \right\} \, .
		%\\
	\end{eqnarray*}
The second probability on the right-hand side of the expression above does not depend on $\xi$, thus
\begin{eqnarray*}
	\mathbb{P}\left\{N^{(1)}(t) + N^{(2)}(t) = n_1+n_2 \middle | \Lambda^{(1)} + \Lambda^{(2)} = \lambda, \Xi = \xi \right\}	&=& \frac{(\lambda t )^{n_1+n_2}e^{-\lambda t}}{(n_1+n_2)!}.
\end{eqnarray*}
	Recall that under $\{\Lambda^{(1)} + \Lambda^{(2)} = \lambda, \Xi = \xi\}$, we have $\xi$ denoting the \textit{split rate} for the underlying Poisson process with parameter $\lambda$. Precisely, $\xi$ means the probability of drawing an event from the first stream. Hence, given that $n_1+n_2$ events occurred, each event has probability $\xi$ of being a Stream 1 event and probability $1-\xi$ of being a Stream 2 event, then we have a binomial distributed event coming into play. 
	Hence, the first probability in the same (above) expression is given by 
	\begin{eqnarray*}
	& &  \!\!\!\!\!\! \!\!\!\!\!\! \!\!\!\!\!\! \!\!\!\!\!\!
	\mathbb{P}\left\{ N^{(1)}(t) = n_1, N^{(2)}(t) = n_2 \middle | N^{(1)}(t) + N^{(2)}(t) = n_1+n_2, \Lambda^{(1)} + \Lambda^{(2)} = \lambda, \Xi = \xi\right\}\\
	&=& \binom{n_1+n_2}{n_1}\xi^{n_1}(1-\xi)^{n_2}\,.
	\end{eqnarray*}
	Hence, 
	\begin{eqnarray*}
		\mathbb{P}\left\{N^{(1)}(t) = n_1, N^{(2)}(t) = n_2 \middle| \Lambda^{(1)} + \Lambda^{(2)} = \lambda, \Xi = \xi\right\}
		&=& \binom{n_1+n_2}{n_1}\xi^{n_1}(1-\xi)^{n_2}\,
		\frac{(\lambda t )^{n_1+n_2}e^{-\lambda t}}{(n_1+n_2)!}
	\end{eqnarray*}
	
	With simple calculation, noting that we can write $\lambda=\lambda\, \xi+ \lambda\, (1-\xi)$, we have
		\begin{eqnarray*}
		\mathbb{P}\left\{N^{(1)}(t) = n_1, N^{(2)}(t) = n_2 \middle| \Lambda^{(1)} + \Lambda^{(2)} = \lambda, \Xi = \xi\right\}
		&=& \left( \frac{e^{-\lambda \xi t} (\lambda \xi t)^{n_1}}{n_1 !}\right)
		\left( \frac{e^{-\lambda (1-\xi) t} (\lambda (1-\xi) t)^{n_2}}{n_2 !}\right).
	\end{eqnarray*}
We have independence between two Poisson random variables with means $\lambda^{(1)}t=\lambda  \xi t$ and $\lambda^{(2)}t=\lambda \, (1-\xi) \, t$, and $\xi$ is such that  
\begin{equation*}
 \xi =\dfrac{\lambda^{(1)}}{\lambda}	= \frac{\lambda^{(1)}}{\lambda^{(1)}+\lambda^{(2)}} \,.
\end{equation*}
 \hfill
\end{proof}
\begin{remark}
Remark that $\Xi=\xi$ is the probability that an event arriving from process
	$\{N(t)=N^{(1)}+N^{(2)}\}$ is generated by 	$\{N^{(1)}\}$. If $\{\Lambda^{(1)} + \Lambda^{(2)} = \lambda, \Xi = \xi\}$ is given then  $\Lambda^{(1)}$ is also given since $ \Lambda^{(1)}=\lambda \xi $. Then  $ \Lambda^{(2)}=\lambda - \Lambda^{(1)} = \lambda(1-\xi)\,$.
If $\xi=1 \Rightarrow \Lambda^{(2)}=0 \Leftrightarrow \Lambda= \Lambda^{(1)}=\lambda$ and $N^{(2)}(t)=0 \; \forall\, t\geq 0$ almost surely.
In addition, since the values of $\lambda, \xi, \lambda^{(1)}$ are arbitrary, It should be also true that
$$(\Lambda^{(1)} + \Lambda^{(2)})\Xi = \Lambda^{(1)} \Leftrightarrow \Xi = \frac{\Lambda^{(1)}}{\Lambda^{(1)}+\Lambda^{(2)}}\, .$$
 The variable $\Xi$ represents the random split rate of process $\{N(t)\}$ in favour of process $\{N^{(1)}(t)\}$ previously defined in the beginning of this section.   \hfill{$\Box$}
\end{remark}

In the next section we are going to model the influence of each risk stream in each claim severity.
\section{Claim severity component \label{s:sev_comp}}
For now, we are going to admit that the distribution of the individual claim size depend on the stream type,  either \textit{historical} or \textit{unforeseeable}. We keep presuming that the actuary may not be able to disclose which stream the claim comes from. At least, he cannot be certain.

Let $F$ and $G$ denote the distributions for claim severities in the \textit{historical} and \textit{unforeseeable} streams, respectively. Consider that the individual claim size, taken at random,  say $Y$, follows a distribution function denoted as $H(y)$.
\begin{lemma}
	For a given claim $Y$, its distribution function, conditional on $\Xi=\xi$, can be represented by
	\begin{equation}
	\label{eq:dfYxi}
	\mathbb{P}\{Y\leq y|\Xi=
\xi \}:= H_{\xi}(y) = \xi F(y) + (1-\xi)G(y),
	\end{equation}
	where $\Xi = \frac{\Lambda^{(1)}}{\Lambda^{(1)}+\Lambda^{(2)}}\in(0,1]$, and $F, G$ correspond to the distributions for claim severities in the historical and unforeseeable streams, respectively. The set $\{\Xi = 1\} = \{\Lambda^{(2)} = 0\}$ has a probability measure $p$.
\end{lemma}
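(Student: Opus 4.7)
The plan is to apply the law of total probability by conditioning on which of the two streams the claim $Y$ originated from. To that end, I would introduce a latent stream-label $J\in\{1,2\}$ attached to the claim, with $J=1$ meaning historical and $J=2$ meaning unforeseeable. The modelling assumption of this section then reads $Y\mid J=1\sim F$ and $Y\mid J=2\sim G$, and, crucially, that given $J$ the severity is independent of the intensities $\Lambda^{(i)}$ (and in particular of $\Xi$). This conditional independence is the one structural assumption that needs to be made explicit, and is really the conceptual heart of the lemma.

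The key quantitative input is the identity $\mathbb{P}\{J=1\mid\Xi=\xi\}=\xi$, which is not a definition but a consequence of the Poisson-thinning result established in the preceding lemma: conditional on $\{\Lambda^{(1)}+\Lambda^{(2)}=\lambda,\,\Xi=\xi\}$ the claim-counting process factors into two independent Poisson processes with rates $\lambda\xi$ and $\lambda(1-\xi)$, so that each individual arrival is independently tagged as a stream-$1$ event with probability $\xi$ and a stream-$2$ event with probability $1-\xi$. Marginalising over $\lambda$ preserves this, giving $\mathbb{P}\{J=1\mid\Xi=\xi\}=\xi$ and $\mathbb{P}\{J=2\mid\Xi=\xi\}=1-\xi$ for a single randomly selected claim.

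With these two ingredients, the derivation reduces to one line:
\begin{align*}
\mathbb{P}\{Y\leq y\mid \Xi=\xi\}
&= \mathbb{P}\{Y\leq y\mid J=1,\,\Xi=\xi\}\,\mathbb{P}\{J=1\mid\Xi=\xi\}\\
&\quad + \mathbb{P}\{Y\leq y\mid J=2,\,\Xi=\xi\}\,\mathbb{P}\{J=2\mid\Xi=\xi\}\\
&= F(y)\,\xi + G(y)\,(1-\xi),
\end{align*}
where I have used conditional independence of $Y$ from $\Xi$ given $J$ to drop $\Xi$ from the severity conditioning. The boundary case $\xi=1$, occurring with probability $p$ by \eqref{eq:probxi1}, reduces to $H_1(y)=F(y)$, consistent with the fact that under $\{\Lambda^{(2)}=0\}$ every observed claim must arrive from the historical stream. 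The main (and really only) obstacle is being careful with the distinction between the two conditionings invoked: the Poisson-thinning statement that converts $\Xi=\xi$ into a per-claim stream probability, and the modelling statement that the severity law depends on the stream label only. Once both are in place the rest is mechanical.
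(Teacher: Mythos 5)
Your proof is correct and follows essentially the same route as the paper: a law-of-total-probability decomposition over the stream of origin, with the per-claim probability $\xi$ of coming from the historical stream taken from the preceding split-rate (thinning) lemma. You merely make explicit two points the paper leaves implicit -- the conditional independence of the severity from $\Xi$ given the stream label, and the thinning justification of $\mathbb{P}\{J=1\mid\Xi=\xi\}=\xi$ -- which is a welcome clarification but not a different argument.
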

\begin{proof}
	The proof is straightforward using the law of total probability.
	\begin{eqnarray*}
		\mathbb{P}\{Y\leq y|\xi\} &=& \mathbb{P}\{Y\leq y | Y = Y^{(1)}, \xi\}\mathbb{P}\{Y = Y^{(1)}| \xi\} + \mathbb{P}\{Y\leq y | Y = Y^{(2)}, \xi\}\mathbb{P}\{Y = Y^{(2)}| \xi\}\\
		&=& F(y)\mathbb{P}\{Y = Y^{(1)}|\xi \} + G(y)\mathbb{P}\{Y = Y^{(2)}| \xi\}\\
		&=&F(y)\cdot\xi + G(y)\cdot(1-\xi).
	\end{eqnarray*}
	where $Y^{(1)}$ and  $Y^{(2)}$ denote random variables of the size of claims stemming from Stream~1 (the \textit{historical stream}) and Stream~2 (the \textit{unforeseeable stream}) respectively.  Note that when $\Xi = 1$, i.e., $\Lambda^{(2)}=0$, $N(t) = N^{(1)}(t)$ and the probability of having a claim from Stream 1, i.e., $\mathbb{P}\{Y = Y^{(1)}|\xi =1 \} = 1$, where as $\mathbb{P}\{Y = Y^{(2)}| \Xi=1\}=0$. This special situation does not affect Equality~\eqref{eq:dfYxi}. \hfill
\end{proof}

\

It has been illustrated in \cite{li2015risk}, see their Lemmas~2 and~3,  that the independence between $\frac{\Lambda^{(1)}}{\Lambda^{(1)}+\Lambda^{(2)}_+}$ 
and $\Lambda^{(1)}+\Lambda^{(2)}_+$, 
conditional on $\Xi \neq 1$, 
result in $\Xi | {\Xi \neq 1}\,$ being distributed as  a $Beta (\alpha_1, \alpha_2)$ law. Then, it is easy to derive the unconditional distribution of $Y$ under these assumptions.
\begin{lemma}
\label{lem:claimsize}
Assume that $\frac{\Lambda^{(1)}}{\Lambda^{(1)}+\Lambda^{(2)}_+}$ and $\Lambda^{(1)}+\Lambda_+^{(2)}$ are independent, and that $\Lambda^{(1)} \sim Gamma (\alpha_1, \beta)$ and $\Lambda_+^{(2)} \sim Gamma(\alpha_2, \beta)$. A given claim size is distributed according to a mixture law of $F(\cdot)$ and $G(\cdot)$, i.e., with density,
\begin{equation}
h_Y(y) = \nu\cdot f(y) + (1-\nu)\cdot g(y),
\label{eqn:clmdist}
\end{equation}
where $\nu = p+(1-p)\frac{B(\alpha_1+1, \alpha_2)}{B(\alpha_1, \alpha_2)}$ and $B(\cdot, \cdot)$ being the Beta function.
\end{lemma}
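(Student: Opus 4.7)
The plan is to obtain $h_Y(y)$ by marginalising the conditional density from the previous lemma over the mixed distribution of $\Xi$. Differentiating \eqref{eq:dfYxi} gives the conditional density $h_\xi(y) = \xi f(y) + (1-\xi) g(y)$. Since $\Xi$ has a point mass $p$ at $\{\Xi = 1\}$ (i.e.\ $\{\Lambda^{(2)}=0\}$) and, conditional on $\Xi \neq 1$, is $Beta(\alpha_1,\alpha_2)$ by the result cited from \cite{li2015risk}, the law of total probability yields
\begin{equation*}
h_Y(y) = p \, h_1(y) + (1-p)\int_0^1 h_\xi(y)\, \frac{\xi^{\alpha_1-1}(1-\xi)^{\alpha_2-1}}{B(\alpha_1,\alpha_2)}\, \mathrm d\xi.
\end{equation*}
Since $h_1(y) = f(y)$, substituting the expression for $h_\xi$ and separating the $f(y)$ and $g(y)$ contributions reduces the problem to computing $\mathbb{E}[\Xi]$ (then $\mathbb{E}[1-\Xi] = 1 - \mathbb{E}[\Xi]$ takes care of the $g(y)$ coefficient).

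Next I would compute
\begin{equation*}
\mathbb{E}[\Xi] = p \cdot 1 + (1-p) \int_0^1 \xi\, \frac{\xi^{\alpha_1-1}(1-\xi)^{\alpha_2-1}}{B(\alpha_1,\alpha_2)}\, \mathrm d\xi
= p + (1-p)\, \frac{B(\alpha_1+1,\alpha_2)}{B(\alpha_1,\alpha_2)},
\end{equation*}
recognising the integrand as the kernel of a $Beta(\alpha_1+1,\alpha_2)$ density up to the ratio of Beta normalising constants. Setting $\nu := \mathbb{E}[\Xi]$ gives the coefficient of $f(y)$, and the coefficient of $g(y)$ is then $1-\nu$, which is the claimed form of \eqref{eqn:clmdist}.

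The only delicate point is bookkeeping at the atom $\{\Xi = 1\}$: at that value the conditional density collapses to $f(y)$ alone, and one must treat this separately before integrating the continuous part. Apart from that, the computation is routine, since the Beta moment is standard and the mixing over $\Xi$ linearises cleanly because $h_\xi(y)$ is affine in $\xi$. No assumptions on $F$ and $G$ beyond admitting densities $f,g$ are needed.
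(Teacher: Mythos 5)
Your proposal is correct and follows essentially the same route as the paper: the paper also marginalises the conditional mixture (at the distribution-function level, $\int_{(0,1]} H_\xi(y)\,dA(\xi)$) over the mixed law of $\Xi$ — an atom of mass $p$ at $\xi=1$ plus a $Beta(\alpha_1,\alpha_2)$ part via Lukacs' proportion–sum independence theorem — and the Beta-moment computation giving $\nu=p+(1-p)B(\alpha_1+1,\alpha_2)/B(\alpha_1,\alpha_2)$ is identical to yours. Working with densities instead of distribution functions and phrasing the coefficient as $\mathbb{E}[\Xi]$ are only cosmetic differences.
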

\begin{proof}
Under the assumptions, we know that $\Xi| {\Xi\neq 1} \frown Beta (\alpha_1, \alpha_2)$ according to \cite{lukacs1955characterization}'s proportion-sum independence theorem. The subsequent proof is then straightforward.  $A_\Xi (\xi)$ denotes the distribution function of $\Xi$  (a mixture), we have
\begin{eqnarray*}
	\mathbb{P}\{Y\leq y\} &=& \int_{(0,1]} H_{\xi}(y) d A(\xi) = pF(y) + (1-p)\int_{(0,1)}H_{\xi}(y)\cdot \frac{\xi^{\alpha_1-1}(1-\xi)^{\alpha_2-1}}{B(\alpha_1, \alpha_2)}d\xi\\
	&=&pF(y) + (1-p)\left[\frac{B(\alpha_1+1, \alpha_2)}{B(\alpha_1, \alpha_2)}\cdot F(y) + \frac{B(\alpha_1, \alpha_2+1)}{B(\alpha_1, \alpha_2)}\cdot G(y)\right]\\
	&=&\left[p+(1-p)\frac{B(\alpha_1+1, \alpha_2)}{B(\alpha_1, \alpha_2)}\right]\cdot F(y) + (1-p) \frac{B(\alpha_1, \alpha_2+1)}{B(\alpha_1, \alpha_2)}\cdot G(y)\\
	&=& \nu\cdot F(y) + (1-\nu)\cdot G(y)\,.
\end{eqnarray*}
 \hfill
\end{proof}

\begin{remark}
	\label{rem:indep}
	If we look carefully at the end of the proof of \cite{li2015risk}' Lemma~2 (see the Appendix, it is not clear from the Lemma) we can conclude that, under the conditions of Lemma~\ref{lem:claimsize}, $Y$ is independent from $N$. This will allow us to consider the parameter estimation using separately the claim frequency and the severity component.
\end{remark} 

\medskip

In the previous section we specified a distribution for the random variable $N(t)$, as well as its parametrization. The unconditional distribution for $N(t)$ was found starting from the Poisson distribution, this is commonly accepted as a starting assumption for claim count data in actuarial science, particularly in motor insurance. We ended up developing a particular mixture of two Negative Binomial distributions for $N(t)$, having started from a different Poisson distribution for each stream of risks.  We'll see in Section~\ref{s:estima} that the assumption is reasonable and may fit actual data.
For the claim severity we will consider a similar reasoning, basing in a particular case from the Gamma distribution family. The Gamma distribution family is also a common fit for insurance severity data. First, we assume a particular form for the distribution of the individual claim severity as a mixture of two distributions, denoted as $F(\cdot)$ and $G(\cdot)$. They represent the behaviours of the claim severities from the two streams, separately the historical and unforeseeable streams, respectively $F(\cdot)$ and $G(\cdot)$. Then, if we specify $F(\cdot)$ and $G(\cdot)$, we can move for the parameter estimation.  

Under many different possibilities we took a simple case but an illustrative one. Consider that both the densities of the two streams come from a common ground, and that in general the claim sizes are distributed as the exponential distribution.
 $Y\sim Exp(\Theta)$, where $\Theta^{-1}$ is the mean, and there is a dependence structure embedded in a random parameter  $\Theta$, such that: 
\begin{enumerate}
	\item 	$\Theta = \mu$ with probability~one if we consider the historical stream; and, 
	\item 
	$\Theta \sim Gamma(\delta, \sigma)$ if we consider the unforeseeable one.
\end{enumerate}
This implies that claims  in the historical stream conform to the Exponential distribution with mean $\mu^{-1}$, whereas those of the unforeseeable stream conform to a Pareto distribution, $Pareto(\delta, \sigma)$. Their respective unconditional densities are
\begin{equation}
\label{eq:sevdist}
f(y) =  \mu e^{-\mu y} \quad \mbox{and}\quad g(y) = \frac{\delta \sigma^\delta}{(\sigma+y)^{\delta+1}}, \quad \mu, \delta, \sigma > 0\, .
\end{equation}

Our model will be complete, and ready for parameter estimation/distribution fit, once the premium calculation is defined. It is of course essential for the risk modelling as well as for risk pricing. So, in the next section we're going to address the premium calculation. Previous developments lead us naturally to deal premium calculation under a Bayesian framework. As explained in the above we will work separately the claim counts and severity premium calculation.
\section{Bayesian premium} \label{sec:Bayesian}
A Bayesian approach and credibility in premium calculation is naturally opened. We'll work with risk random variables that are observable and these variables  follow  distributions that are mixtures of other distributions.  In turn, these latter distributions are assumed to be members of the linear exponential family. Associated to these we consider conjugate priors as distributions for the parameters in a Bayesian setting. Under this assumption credibility is shown to be exact, i.e., the credibility premium coincides with the Bayesian premium. In another words, the Bayesian premium is a linear function with the respect to the observable risk data, making premium calculation straightforward.

Retrieving some essentials in the theory of credibility premium, particularly \textit{credibility exact}, consider a generic a random variable, say $X$, following a distribution belonging to the linear exponential family, depending of some parameter, say $\theta$. Its probability or density function is of the form [see \cite{klugman2012loss}, Chapters~5 and 15]:
\begin{equation}
\label{eq:lexpof}
f_{X|\Theta}(x|\theta) = \frac{p(x) e^{r(\theta) x}}{q(\theta)}\, .
\end{equation}
If parameter $\theta$ is considered to be an outcome of a random variable $\Theta$, whose density function is of the following form, denoted as $\pi(.)$:
\begin{equation}
\label{eq:lexpoprior}
\pi(\theta) = \frac{[q(\theta)]^{-k} e^{\mu k r(\theta) } r'(\theta)}{c(\mu, k)}\,,
\end{equation} 
then we are in the presence of a conjugate prior. In the above formulae, \eqref{eq:lexpof} and \eqref{eq:lexpoprior}, $p(.)$ is some function not depending of parameter $\theta$, $r(.)$, $q(.)$ are functions of the parameter, and $c(.)$ is a normalizing function of given parameters.

In this manuscript we are working with finite mixtures of prior distributions of form \eqref{eq:lexpoprior} and distributions of the linear exponential family \eqref{eq:lexpof}. Recall that in the case of the claim counts we considered a Poisson random variable mixed with the mixture given by \eqref{eqn:gammamix}, and that in the case of the claim severity we considered an exponential mixed with a prior that can be given as a mixture of a degenerate distribution and a Gamma (see end of Section~\ref{s:sev_comp}).

Consider from now on that the prior distribution is of the following form: 
\begin{equation}
\pi(\theta) = \sum_{i = 1}^\eta \omega_i \frac{[q(\theta)]^{-k_i} e^{\mu_i k_i r(\theta)} r'(\theta)}{c_i(\mu_i, k_i)}\,,
\label{eqn:mixprior}
\end{equation}
where $\omega_i,\, i=1,\cdots, \eta$, are given weights.
\begin{theorem}
		\label{teor:conjmix}
	Suppose that given $\Theta = \theta$, the observable random variables $X_1,\ldots, X_n=:{\bf X}$ are i.i.d.~with common probability function given by \eqref{eq:lexpof}, and that the  
	prior distribution of $\Theta$, $ \pi(\theta) $, is of the form given by \eqref{eqn:mixprior}. Then, the posterior distribution, denoted as $\pi(\theta | {\bf x})$ with ${\bf x} = \{x_1, \ldots, x_n\}$, is of a mixture form as \eqref{eqn:mixprior}:
		\begin{eqnarray}
	\pi(\theta | {\bf x}) &=& \sum_i \omega^*_i \frac{[q(\theta)]^{-k^*_i} e^{\mu^*_i k^*_i r(\theta)} r'(\theta)}{c_i(\mu^*_i, k^*_i)},
	\end{eqnarray}
	where
	\begin{eqnarray}
	\mu^*_i &=& \frac{\mu_i k_i+\sum_j x_j}{k_i+n},\\
	k^*_i &=& k_i+n,\\[0.25cm]
	%
	%&& 	\nonumber\\
	%
	w^*_i &=& \frac{\omega_i \frac{c_i(\mu^*_i, k^*_i)}{c_i(\mu_i, k_i)}}{\sum_i \omega_i \frac{c_i(\mu^*_i, k^*_i)}{c_i(\mu_i, k_i)}}\,.
	\end{eqnarray}
\end{theorem}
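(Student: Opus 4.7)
The plan is to apply Bayes' rule directly and exploit the linearity of the mixture prior. First I would write the joint density of the observables as
\[ L(\mathbf{x}\mid\theta) = \Bigl(\prod_{j=1}^n p(x_j)\Bigr)\,[q(\theta)]^{-n}\,\exp\Bigl\{r(\theta)\sum_{j=1}^n x_j\Bigr\}, \]
using conditional independence and the form \eqref{eq:lexpof}. Multiplying by the prior \eqref{eqn:mixprior} and interchanging the product with the sum, the joint density $L(\mathbf{x}\mid\theta)\,\pi(\theta)$ becomes, up to the factor $\prod_j p(x_j)$ which does not depend on $\theta$,
\[ \sum_{i=1}^\eta \frac{\omega_i}{c_i(\mu_i,k_i)}\,[q(\theta)]^{-(k_i+n)}\,\exp\Bigl\{r(\theta)\bigl(\mu_i k_i + \textstyle\sum_j x_j\bigr)\Bigr\}\,r'(\theta). \]

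Next I would match each summand to the conjugate-prior shape \eqref{eq:lexpoprior} with updated hyperparameters. Setting $k_i^* = k_i + n$ and $\mu_i^* k_i^* = \mu_i k_i + \sum_j x_j$, i.e., the stated formulas, the $i$-th term equals $\omega_i/c_i(\mu_i,k_i)$ times $[q(\theta)]^{-k_i^*}\,e^{\mu_i^* k_i^* r(\theta)}\,r'(\theta)$. Multiplying and dividing by $c_i(\mu_i^*, k_i^*)$ rewrites this term as the coefficient $\omega_i\,c_i(\mu_i^*, k_i^*)/c_i(\mu_i, k_i)$ times the properly normalized conjugate density with parameters $(\mu_i^*, k_i^*)$, so after rearrangement the posterior is (proportional to) a nonnegative linear combination of conjugate densities with unnormalized weights $\omega_i\,c_i(\mu_i^*, k_i^*)/c_i(\mu_i, k_i)$.

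Finally, I would normalize by imposing that $\pi(\theta\mid\mathbf{x})$ integrates to one over the parameter space; since each component density already integrates to one, this forces the mixing weights to be precisely $\omega_i^* = \bigl[\omega_i\,c_i(\mu_i^*,k_i^*)/c_i(\mu_i,k_i)\bigr]\big/\sum_\ell \bigl[\omega_\ell\,c_\ell(\mu_\ell^*,k_\ell^*)/c_\ell(\mu_\ell,k_\ell)\bigr]$, matching the stated formula. The whole argument is essentially algebraic bookkeeping; the only point requiring care is keeping straight which factors get absorbed into the updated normalizing constant $c_i(\mu_i^*,k_i^*)$ and which become part of the unnormalized mixing weight, but no real analytic difficulty arises because the mixture structure commutes with the Bayes update by linearity of the integral in the prior.
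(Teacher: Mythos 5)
Your argument is correct and follows essentially the same route as the paper's proof: apply Bayes' rule, match each summand of the likelihood-times-prior to the conjugate form with updated hyperparameters $k_i^* = k_i + n$ and $\mu_i^* k_i^* = \mu_i k_i + \sum_j x_j$, then rescale by $c_i(\mu_i^*,k_i^*)/c_i(\mu_i,k_i)$ and normalize using the fact that each component density integrates to one. No gaps; the bookkeeping you describe is exactly what the paper does when it evaluates the denominator integral.
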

\begin{proof}
	With observations ${\bf X} = {\bf x}$, the posterior distribution is
	\begin{eqnarray*}
	\pi(\theta | {\bf x}) &=& \frac{ \frac{\prod_j p(x_j)e^{r(\theta) \sum_j x_j} }{[q(\theta)]^n} \cdot \sum_{i = 1}^m \omega_i \frac{[q(\theta)]^{-k_i} e^{\mu_i k_i r(\theta)} r'(\theta)}{c_i(\mu_i, k_i)} }{\displaystyle\int_{\Theta} \left(\frac{\prod_j p(x_j)e^{r(\theta) \sum_j x_j} }{[q(\theta)]^n} \cdot \sum_{i = 1}^m \omega_i \frac{[q(\theta)]^{-k_i} e^{\mu_i k_i r(\theta)} r'(\theta)}{c_i(\mu_i, k_i)} \right) \,\rm d\theta} \\[0.25cm]
	%
%	& & \nonumber\\ 
	%
	&=&\frac{\sum_i \omega_i \frac{[q(\theta)]^{-k_i-n} e^{(\mu_i k_i+\sum_j x_j) r(\theta)} r'(\theta)}{c_i(\mu_i, k_i)}}{\sum_i \omega_i \displaystyle{\int_{\Theta}} \frac{[q(\theta)]^{-k_i-n} e^{(\mu_i k_i+\sum_j x_j) r(\theta)} r'(\theta)}{c_i(\mu_i, k_i)} \,\rm d\theta }
	\\[0.25cm]
	%
%	& & \nonumber\\
	%
	&=&\frac{\sum_i \omega_i \frac{[q(\theta)]^{-k_i-n} e^{(\mu_i k_i+\sum_j x_j) r(\theta)} r'(\theta)}{c_i(\mu_i, k_i)} }{\sum_i \omega_i \frac{c_i(\mu^*_i, k^*_i)}{c_i(\mu_i, k_i)}\displaystyle{\int_\Theta} \frac{[q(\theta)]^{-k^*_i} e^{k^*_i\mu^*_i r(\theta)} r'(\theta)}{c_i^*(\mu^*_i, k^*_i)}\,\rm d\theta }  %\nonumber
	%\\[0.25cm]
	%
%	& & \nonumber\\
	%
	%&=&
	= \sum_i \omega^*_i \frac{[q(\theta)]^{-k^*_i} e^{\mu^*_i k^*_i r(\theta)} r'(\theta)}{c_i(\mu^*_i, k^*_i)}.
	\end{eqnarray*}
\hfill
\end{proof}

From the posterior distribution we can calculate the Bayesian premium, as an estimate for the risk premium [see \cite{klugman2012loss}, Chapters 17 and 18] defined as $\mathbb{E}[X|\theta]=\mu(\theta)$. The Bayesian premium is given by estimate for next rating period, $n+1$, $\mathbb{E}[X_{n+1}|{\bf X} = {\bf x}]$ (with $n$, ${\bf X}$ and ${\bf x}$  as defined in Theorem~\ref{teor:conjmix}) and can be calculated via the posterior distribution $\pi(\theta | {\bf x})$, as
\begin{equation*}
\mathbb{E}[X_{n+1}|{\bf x}]=\mathbb{E}[\mu(\Theta)|{\bf x}]\,.
\end{equation*}
If this expectation is a linear function on the observations then we have exact credibility. This is the case when we work with conjugate distributions.

Moving to our particular concern, the prior distributions under study in our work, both for the claim counts and severity are of the above (mixture) form. As said in Remark~\ref{rem:indep}, we can calculate the premium components, for the claim counts and severity, separately. We are going to consider first the claim frequency and then the severity component.

Before going to that we must write some considerations on premium calculation and estimation. Premia are set to be calculated for rating periods, most commonly the year period. For estimating, risk observations must be organized by periods, suppose that we observed the risk for $m$ years, or rating periods. Then, our observation vector for the claim counts is ${\bf n} = \{n_1,\ldots, n_m\}$. For each year, say year $j$, we'll have $n_j$ individual claims observed, say vector ${\bf y}_j = \{y_{j1},\ldots, y_{jn_j}\}$. This means that observations in year $j$ are set in the two fold vector $(n_j,{\bf y}_j)$. Of course that if we needed to calculate just the annual premium we would just need annual amounts on aggregate, since a premium is set for an aggregate quantity. However, we may consider that claim severities bring some information on an unforseeable stream.  Then, it would be interesting to consider what is the premium contribution for that. Besides, we may also consider that information in both claim counts and severity is the same, then there is no need to double that in premium calculation.

Besides calculating Bayesian premia, we will still need to estimate parameters to feed in the Bayesian estimates (\textit{Empirical Bayes}). This is done in Section~\ref{s:estima}.
\subsection{Frequency component}
If we consider a prior to be a Gamma mixture of the form given by \eqref{eqn:gammamix}, we reach to a posterior with a similar mixture, with updated paramenters. This is given in Lemma~\ref{lem:pstGmix} that follows.
\begin{lemma}\label{lem:pstGmix}
	Let the prior distribution be \eqref{eqn:gammamix}. With a Poisson model distribution, the posterior distribution is still a mixture of Gamma distributions, with updated parameters:
	\begin{eqnarray}
	\pi(\lambda | {\bf n}) &=& w \, \cdot \frac{ (\beta+m)^{\sum_j n_j +\alpha_1}\lambda^{\sum_j n_j +\alpha_1-1}e^{-(m+\beta) \lambda} }{\Gamma(\sum_j n_j+\alpha_1)} \nonumber
	\\[2.5mm]
	&&+ (1-w)\cdot \frac{ (\beta+m)^{\sum_j n_j +\alpha_1+\alpha_2}\lambda^{\sum_j n_j +\alpha_1+\alpha_2-1}e^{-(m+\beta) \lambda} }{\Gamma(\sum_j n_j+\alpha_1+\alpha_2)},
	\label{eqn:posterior}
	\end{eqnarray}
	where
	\begin{eqnarray}
w &=& \frac{1}{1+G(p, \alpha_1, \alpha_2, \beta, {\bf n})} \label{eq:w}\\[1.75mm]
	G(p, \alpha_1, \alpha_2, \beta, {\bf n}) & = & \frac{1-p}{p}\cdot\frac{B(\alpha_1, \alpha_2)}{B(\sum_j n_j +\alpha_1, \alpha_2)}\cdot\left(\frac{\beta}{\beta+m}\right)^{\alpha_2}, \nonumber 
\end{eqnarray}
	 ${\bf n} = \{n_1,\ldots, n_m\}$ is the vector of claim count observations and $m$ is the sample size ($\sum_j \cdot=  \sum_{j=1}^m \cdot $).
\end{lemma}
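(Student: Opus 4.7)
The plan is a direct Bayesian computation: write the posterior as likelihood times prior divided by its integral, recognise two Gamma kernels, and then simplify the normalising weights. This is essentially the mixture case of Theorem~\ref{teor:conjmix}, restricted to Poisson-Gamma conjugacy, so conceptually there is nothing surprising; the only real work is bookkeeping.

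First, I would write down the joint Poisson likelihood for independent observations $n_1,\ldots,n_m$ given $\Lambda=\lambda$, namely $L(\lambda\mid{\bf n})\propto e^{-m\lambda}\lambda^{\sum_j n_j}$, discarding the factor $\prod_j n_j!$ that does not depend on $\lambda$. Multiplying this by the mixture prior \eqref{eqn:gammamix} distributes over the two components and yields, up to the same $\lambda$-free factor, a sum of two unnormalised Gamma kernels in $\lambda$ with common rate $\beta+m$ and shape parameters $\sum_j n_j+\alpha_1$ and $\sum_j n_j+\alpha_1+\alpha_2$ respectively. Each kernel is multiplied by a constant in which the original mixture weight ($p$ or $1-p$), the original gamma normaliser ($\beta^{\alpha_1}/\Gamma(\alpha_1)$ or $\beta^{\alpha_1+\alpha_2}/\Gamma(\alpha_1+\alpha_2)$) and the discarded likelihood factor appear.

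Next I would normalise by integrating over $\lambda\in(0,\infty)$. Since each kernel integrates to $\Gamma(\text{shape})/(\beta+m)^{\text{shape}}$, the posterior becomes a genuine mixture of two $\mathrm{Gamma}(\sum_j n_j+\alpha_1,\beta+m)$ and $\mathrm{Gamma}(\sum_j n_j+\alpha_1+\alpha_2,\beta+m)$ densities, exactly as displayed in \eqref{eqn:posterior}. The updated weight of the first component is obtained by normalising the two constants collected above.

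The one step that requires care — and which I expect to be the main obstacle, purely algebraically — is recasting that weight into the compact form \eqref{eq:w}. Writing the weight as $1/(1+G)$ with $G$ equal to the ratio of the second normalising constant to the first, one obtains
\begin{equation*}
G \;=\; \frac{1-p}{p}\cdot\frac{\Gamma(\alpha_1)}{\Gamma(\alpha_1+\alpha_2)}\cdot\frac{\Gamma(\sum_j n_j+\alpha_1+\alpha_2)}{\Gamma(\sum_j n_j+\alpha_1)}\cdot\frac{\beta^{\alpha_2}}{(\beta+m)^{\alpha_2}}.
\end{equation*}
Using the Beta-function identity $B(a,b)=\Gamma(a)\Gamma(b)/\Gamma(a+b)$ twice (once for $(\alpha_1,\alpha_2)$ and once for $(\sum_j n_j+\alpha_1,\alpha_2)$), the two Gamma ratios collapse into $B(\alpha_1,\alpha_2)/B(\sum_j n_j+\alpha_1,\alpha_2)$, and the factor $\Gamma(\alpha_2)$ cancels. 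This gives exactly the expression for $G(p,\alpha_1,\alpha_2,\beta,{\bf n})$ in the statement, completing the proof.
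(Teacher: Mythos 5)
Your proposal is correct and follows essentially the same route as the paper: a direct computation of likelihood times the mixture prior, recognition of two Gamma kernels with common rate $\beta+m$, normalisation by integrating each kernel, and rewriting the first weight as $1/(1+G)$ where $G$ is the ratio of the two normalising constants, collapsed into the Beta-function form via $B(a,b)=\Gamma(a)\Gamma(b)/\Gamma(a+b)$. The algebra you outline (including the cancellation of $\Gamma(\alpha_2)$) matches the paper's derivation exactly.
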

\begin{proof}
	Under the observations ${\bf n} = \{n_1,\ldots, n_m\}$,
	\begin{eqnarray*}
	\pi(\lambda | {\bf n}) &=& \frac{\prod_j \frac{e^{-\lambda}\lambda^{n_j}}{n_j!} \left\{p\cdot\frac{\lambda^{\alpha_1-1}\beta^{\alpha_1} e^{-\beta \lambda}}{\Gamma(\alpha_1)} + (1-p)\cdot\frac{\lambda^{(\alpha_1+\alpha_2)-1}\beta^{\alpha_1+\alpha_2} e^{-\beta \lambda}}{\Gamma(\alpha_1+\alpha_2)}\right\}}{{\displaystyle\int_\Lambda} \prod_j \frac{e^{-\lambda}\lambda^{n_j}}{n_j!} \left\{p\cdot\frac{\lambda^{\alpha_1-1}\beta^{\alpha_1} e^{-\beta \lambda}}{\Gamma(\alpha_1)} + (1-p)\cdot\frac{\lambda^{(\alpha_1+\alpha_2)-1}\beta^{\alpha_1+\alpha_2} e^{-\beta \lambda}}{\Gamma(\alpha_1+\alpha_2)}\right\}\,\rm d\lambda}\nonumber\\[2.5mm]
		%
%	&& \nonumber\\
	%
	&=& \frac{p\cdot \frac{\lambda^{\sum_j n_j +\alpha_1-1}\beta^{\alpha_1} e^{-(m+\beta) \lambda}}{\Gamma(\alpha_1)} + (1-p) \cdot \frac{\lambda^{\sum_j n_j+(\alpha_1+\alpha_2)-1}\beta^{\alpha_1+\alpha_2} e^{-(m+\beta) \lambda}}{\Gamma(\alpha_1+\alpha_2)}}{p\cdot\frac{\Gamma(\sum_j n_j+\alpha_1)}{\Gamma(\alpha_1)}\left(\frac{\beta}{\beta+m}\right)^{\alpha_1}\left(\frac{1}{\beta+m}\right)^{\sum_j n_j}+ (1-p)\cdot \frac{\Gamma(\sum_j n_j+\alpha_1+\alpha_2)}{\Gamma(\alpha_1+\alpha_2)}\left(\frac{\beta}{\beta+m}\right)^{\alpha_1+\alpha_2}\left(\frac{1}{\beta+m}\right)^{\sum_j n_j} }\nonumber\\[2.5mm]
		%
%	&& \nonumber\\
	%
	&=&\frac{1}{1+G(p, \alpha_1, \alpha_2, \beta, {\bf n})}\cdot \frac{ (\beta+m)^{\sum_j n_j +\alpha_1}\lambda^{\sum_j n_j +\alpha_1-1}e^{-(m+\beta) \lambda} }{\Gamma(\sum_j n_j+\alpha_1)} \nonumber
	\\[2.5mm]
		%
%	&& \nonumber\\
	%
	&& \hspace{0.5cm}  + \frac{G(p, \alpha_1, \alpha_2, \beta, {\bf n})}{1+G(p, \alpha_1, \alpha_2, \beta, {\bf n})}\cdot \frac{ (\beta+m)^{\sum_j n_j +\alpha_1+\alpha_2}\lambda^{\sum_j n_j +\alpha_1+\alpha_2-1}e^{-(m+\beta) \lambda} }{\Gamma(\sum_j n_j+\alpha_1+\alpha_2)}.
	\end{eqnarray*}
	The result is as described. \hfill
\end{proof}

 For a fixed individual claim size, The pure premium estimator will be considered as the posterior mean of the claim frequency component, i.e.,
\begin{equation}
\label{eq:BayesPremium_N}
\mathbb{E}[N_{m+1}| {\bf n}] = \mathbb{E}[\Lambda | {\bf n}]= w\cdot \frac{\sum_j n_j +\alpha_1}{\beta+m} + (1-w)\cdot \frac{\sum_j n_j +\alpha_1+\alpha_2}{m+\beta}\,,
\end{equation}
where $z$ is given by \eqref{eq:w}.

 We note that the Bayesian premium does not have the credibility form as it may look at first [see \cite{klugman2012loss}, Chapters 17 and 18]. Our Bayesian premium estimator is not linear on the observations, the $G(\cdot)$ function depend on vector ${\bf n}$, through a Beta function, and it is not a linear function on the observations. Furthermore, due to Stirling's formula we have that the quocient of the Gamma functions 
 $\Gamma(n+a)/\Gamma(n+b)\sim n^{a-b}$ as $n\rightarrow\infty$, 
 allowing us to write that the function 
 $$
 G(p, \alpha_1, \alpha_2, \beta, {\bf n})\sim \frac{1-p}{p}\frac{\Gamma(\alpha_1)}{\Gamma(\alpha_1+\alpha_2)}
 \left( \frac{\beta \sum_j n_j}{\beta+m}\right)^{\alpha_1} \text{ as } \sum_j n_j \rightarrow \infty\,. 
 $$
 
 In fact, the Bayesian premium is of credibility form when we work with conjugate priors. What we have here is a mix of conjugate priors. We have, though, a mixture of two credibility premia. For instance, we have that the formula 
 \begin{equation}
 \label{eq:BayesPremium_N1}
  \frac{\sum_j n_j +\alpha_1}{\beta+m} =
  \frac{m}{\beta+m}\cdot \left(\frac{\sum_j n_j}{m}\right) +
   \frac{\beta}{\beta+m}\cdot  \left(\frac{\alpha_1}{\beta}\right) 
 \end{equation}
 can be viewed as the credibility premium as if there was only the historical stream claim frequency. Similarly, $({\sum_j n_j +\alpha_1+\alpha_2})/\left({m+\beta}\right)$ gives a corresponding formula when considering the existence of an unforeseeable stream. Then, Formula~\eqref{eq:BayesPremium_N} is a weighted average of these two premia, where the weights depend on the observations.
 
 In order to calculate the Bayesian premium, we still need the claim severity component. A similar methodology can be implemented for the claim severity component.
\subsection{Severity component}
For the severity component, following what we wrote on the mean of the exponetial distribution at the end of Section~\ref{s:sev_comp}, we  can set the prior as
\begin{equation}
\pi(\theta) = \nu\Delta_\theta(\{\mu\}) + (1-\nu) \frac{\theta^{\delta-1} \sigma^{\delta} e^{-\sigma \theta}}{\Gamma(\delta)}\,, 
\label{eqn:sizemix}
\end{equation}
where $\Delta_\theta(\{\mu\}) = \mathbf{1}_{\{\mu\}}(\theta)$ is the Dirac measure, and $ \nu $ is as given in \eqref{eqn:clmdist}. The posterior is still a mixture, as follows. As said, given $ \Theta=\theta $, the conditional  distribution of a single severity is exponential with mean $1/\theta$.

We are calculating a premium estimate for the severity component only, based on the observed single quantities. Let's consider that the sample is generically of  size $m^\ast$ and the observation vector is ${\bf y} = \{y_1,\ldots, y_{m{^\ast}}\}$. If sample size of claim counts ${\bf N}={\bf n}$ is $m$ then $m^\ast=\sum_{j=1}^{m}n_j\,$, $n_j\geq 1$. The posterior distribution is set in the following Lemma~\ref{lem:sevpost} and the premium follows after in Expression~\eqref{eq:BayesPremium_Y}. 
\begin{lemma}\label{lem:sevpost}
	Let the prior distribution be \eqref{eqn:sizemix} and the conditional  distribution of a single severity, given $\Theta=\theta$, be exponential with mean $1/\theta$. The posterior distribution is still a mixture distribution in terms of the prior form with updated parameters, such that
	\begin{equation}
	\pi(\theta | {\bf y}) = \omega\cdot \Delta_\theta(\{\mu\}) + (1-\omega) \cdot \frac{ (\sigma+\sum_i y_i)^{{m^\ast}+\delta}\, \theta^{{m^\ast} +\delta-1} \, e^{-(\sigma+\sum_i y_i) \theta} }{\Gamma({m^\ast}+\delta)},
	\label{eqn:sizeposterior}
	\end{equation}
	where
	\begin{eqnarray}
\omega &=&	\frac{1}{1+\varphi(\nu, \mu, \delta, \sigma, {\bf y})} \label{eq:omega} \\[0.15cm]
	\varphi(\nu, \mu, \delta, \sigma, {\bf y}) & = & \frac{1-\nu}{\nu}\cdot\frac{\Gamma(m^\ast+\delta) \sigma^{\delta}}{\Gamma(\delta) (\sigma+\sum_i y_i)^{{m^\ast}+\delta}}\cdot \mu^{-{m^\ast}} e^{\mu \sum_i y_i}\,,\nonumber
	\end {eqnarray}
	with $ \nu $ as given in \eqref{eqn:clmdist},  ${\bf y} = \{y_1,\ldots, y_{m{^\ast}}\}$ and $\sum_i \cdot =\sum_{i=1}^{m^\ast} \cdot$.
\end{lemma}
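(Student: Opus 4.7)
The plan is to apply Bayes' rule directly: the posterior is proportional to the likelihood times the mixture prior, and because the prior is a linear combination of two components, the joint density splits into two unnormalised pieces which, after renormalising, are recognised as the two mixture components stated in \eqref{eqn:sizeposterior}.

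First I would write the likelihood of the sample $\mathbf{y}=\{y_1,\ldots,y_{m^\ast}\}$ under the conditional exponential model as
$L(\theta\mid\mathbf{y}) = \theta^{m^\ast} e^{-\theta\sum_i y_i}$,
and then multiply by \eqref{eqn:sizemix}. The Gamma branch yields the kernel
$\theta^{m^\ast+\delta-1} e^{-(\sigma+\sum_i y_i)\theta}$
with prefactor $(1-\nu)\sigma^{\delta}/\Gamma(\delta)$, which I would complete into a Gamma$(m^\ast+\delta,\,\sigma+\sum_i y_i)$ density by multiplying and dividing by $\Gamma(m^\ast+\delta)/(\sigma+\sum_i y_i)^{m^\ast+\delta}$. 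The Dirac branch requires a tiny bit of care, as $L(\theta\mid\mathbf{y})\,\Delta_\theta(\{\mu\})$ collapses to the constant $\mu^{m^\ast}e^{-\mu\sum_i y_i}$ multiplied by $\Delta_\theta(\{\mu\})$; no further normalisation of that factor is needed since the Dirac measure is already a probability.

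Next I would compute the normalising constant as the sum of the two branch integrals,
$\nu\,\mu^{m^\ast}e^{-\mu\sum_i y_i}$
and
$(1-\nu)\,\frac{\sigma^{\delta}\Gamma(m^\ast+\delta)}{\Gamma(\delta)(\sigma+\sum_i y_i)^{m^\ast+\delta}}$,
and divide through. Writing the posterior in the form $\omega\,\Delta_\theta(\{\mu\}) + (1-\omega)\,\text{Gamma}(m^\ast+\delta,\sigma+\sum_i y_i)$, the mixing weight becomes
$\omega = \nu\mu^{m^\ast}e^{-\mu\sum_i y_i}/\bigl(\nu\mu^{m^\ast}e^{-\mu\sum_i y_i} + (1-\nu)\sigma^{\delta}\Gamma(m^\ast+\delta)/[\Gamma(\delta)(\sigma+\sum_i y_i)^{m^\ast+\delta}]\bigr)$.
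Dividing numerator and denominator by the numerator produces exactly $\omega = 1/(1+\varphi(\nu,\mu,\delta,\sigma,\mathbf{y}))$ with $\varphi$ as displayed in \eqref{eq:omega}.

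There is really no obstacle here; the proof is a direct conjugacy calculation analogous to Lemma~\ref{lem:pstGmix} and to the general Theorem~\ref{teor:conjmix}, of which this is morally the case $\eta=2$ with one of the two prior components degenerate. The only point that deserves a sentence of justification is the treatment of the Dirac measure: since $\Delta_\theta(\{\mu\})$ concentrates all mass at $\theta=\mu$, any $\theta$-dependent factor multiplying it is evaluated at $\mu$, which is why the exponential likelihood turns into the deterministic weight $\mu^{m^\ast}e^{-\mu\sum_i y_i}$ rather than producing any Gamma-type kernel on that branch. Once that is noted, the identification of $\pi(\theta\mid\mathbf{y})$ with \eqref{eqn:sizeposterior} is immediate.
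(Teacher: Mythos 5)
Your proposal is correct and follows essentially the same route as the paper: a direct Bayes computation multiplying the exponential likelihood $\theta^{m^\ast}e^{-\theta\sum_i y_i}$ by the two-component prior, evaluating the Dirac branch at $\theta=\mu$, completing the other branch to a $Gamma(m^\ast+\delta,\sigma+\sum_i y_i)$ kernel, and normalising so that the weight takes the form $1/(1+\varphi)$ with exactly the stated $\varphi$. No gaps; your explicit remark about how the Dirac measure absorbs the likelihood factor is the same (implicit) step the paper uses.
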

\begin{proof}
	Under the observations ${\bf y} = \{y_1,\ldots, y_{m^\ast}\}$, and let $\Pi_\Theta(\cdot)$ be the distribution function of $\Theta$
	\begin{eqnarray}
	\pi(\theta| {\bf n}) &=& \frac{\left(\prod_{i=1}^{m^\ast} \mu e^{-\mu y_i}\right) 
		\left\{\nu\Delta_\theta(\{\mu\})\right\}
		+
		\left(\prod_{i=1}^{m^\ast} \theta e^{-\theta y_i}\right)
		 \left\{ (1-\nu) \frac{\theta^{\delta-1} \sigma^{\delta} e^{-\sigma \theta}}{\Gamma(\delta)}\right\}}
	 {\int_\Theta \left(\prod_{i=1}^{m^\ast}  \theta e^{-\theta y_i}\right) \rm d\Pi_\Theta(\theta) }
	 	\nonumber\\
	&& \nonumber\\[0.1cm]
	&=& 
	\frac{\nu \left(\mu^{m^\ast} e^{-\mu \sum_i y_i}\right) \Delta_\theta(\{\mu\}) + (1-\nu) \, \frac{\theta^{{m^\ast}+\delta -1}\sigma^{\delta} e^{-(\sigma+\sum_i y_i) \theta}}{\Gamma({m^\ast}+\delta)}}
	{\nu \, \left(\mu^{m^\ast} e^{-\mu \sum_i y_i}\right) + (1-\nu) \, \frac{\Gamma({m^\ast}+\delta) \sigma^{{m^\ast}+\delta}}{\Gamma(\delta) (\sigma+\sum_i y_i)^{{m^\ast}+\delta}} }\nonumber\\
	&& \nonumber\\[0.1cm]
	&=&
	\frac{1}{1+\varphi(\nu, \mu, \delta, \sigma, {\bf y})} \, \Delta_\theta(\{\mu\}) + \frac{\varphi(\nu, \mu, \delta, \sigma, {\bf y})}{1+\varphi(\nu, \mu, \delta, \sigma, {\bf y})}\, \frac{ (\sigma+\sum_i y_i)^{{m^\ast}+\delta} \, \theta^{{m^\ast} +\delta-1} \, e^{-(\sigma+\sum_i y_i) \theta} }{\Gamma({m^\ast}+\delta)} \,. \nonumber \\
	& & %\nonumber 
	\end{eqnarray}
	The result is as described. \hfill
\end{proof}

Unlike \eqref{eq:BayesPremium_N}, the Bayesian premium for the severity component is not given directly by the posterior mean.  That is given by the posterior expectation  $\mathbb{E}[\mu(\Theta)|{\bf x}]$, and is the weighted average between the mean of the exponential ($ 1/\mu $) and the mean of the (posterior) $Gamma({m^\ast}+\delta, \sigma+\sum_i y_i)$:
\begin{equation}
\label{eq:BayesPremium_Y} 
\mathbb{E}[Y_{n+1}| {\bf y}] = \mathbb{E}[\mu(\Theta)|{\bf x}] = \omega \, \frac{1}{\mu} + (1-\omega)\,  \frac{{m^\ast} +\delta}{\sum_{i=1}^{m^\ast} y_i+\sigma}\,,
\end{equation}
with $\omega$ calculated through~\eqref{eq:omega}.

%\textcolor{blue}
{As a remark, we can see that as $m^*\rightarrow 0$, $\sum_i y_i \rightarrow 0$, and 
	\[
	\varphi(\nu, \mu, \delta, \sigma, {\bf y}) \sim \frac{1-\nu}{\nu}.
	\]}

Hence, the Bayesian premium, estimate for the next rating period $m+1$ and denoted as $P_{m+1}$ can be given by multiplying \eqref{eq:BayesPremium_N} with \eqref{eq:BayesPremium_Y}, feeding \eqref{eq:w} and \eqref{eq:omega},
\begin{eqnarray}
	\mathbb{E}[P_{m+1}| {\bf n} ,{\bf y}] &=& \left( w \cdot \frac{\sum_{j=1}^{m} n_j +\alpha_1}{\beta+m} + (1-w)\cdot \frac{\sum_{j=1}^{m} n_j +\alpha_1+\alpha_2}{m+\beta}\right) \nonumber
	\\ 
	&&  \hspace{2.85cm} 
	\times \left(\omega \cdot \frac{1}{\mu} + (1-\omega) \cdot \frac{m^\ast +\delta}{\sum_{j=1}^{m^\ast} y_i+\sigma}\right). \label{eqn:prem}
\end{eqnarray}
In this formula we have a set of prior parameters that are most often unkown, and they need to be estimated from observed data, leading to the \textit{Empirical Bayesian Premium}. This is not an easy task, commonly when we have to deal with mixtures of distributions, we work with an unusual quantity of estimating parameters. 

In the coming subsection we present a numerical example to address the asymptotics of functions $G$ and  $\varphi$, from \eqref{eq:w} and \eqref{eq:omega}, in the derived formulae, and to understand the non-linearity of these functions with the observed claims (counts and costs).
%%%
%%%
\subsection{Numerical example}
%
%\textcolor{blue}
{For illustration, we work with a numerical example setting a scenario where the claim counts follow a $(0, 2, 1)$ pattern in every three consecutive years over a 12-year period. That is to say, we investigate one policyholder's claim histories and his/her claim counts for each consecutive three years are 0, 2, 1, respectively.\\ 

Here, each individual claim size is simulated to be the prior mean
	\begin{equation*}
	\mathbb{E}[Y] = \nu\frac{1}{\mu} + (1-\nu)\frac{\delta}{\sigma},
	\end{equation*}
plus a 
%\textcolor{red}
{Normal distributed random error}
 with mean $0$ and standard deviation $0.1$.
	Further, we assume that $\alpha_1 = 3$, $\alpha_2 = 1$, $\beta = 0.5$, $p = 0.6$, $\mu = 1$, $\delta = 2$, $\sigma = 0.5$. As a consequence, applying Formula~\eqref{eqn:prem}},  we compute annual Bayesian premia, figures and graph are shown in Table~\ref{tab:prem_evol} and Figure~\ref{fig:prem_evol}, respectively.
	
\begin{table}%[ht!]
	\centering
	\begin{tabular}{|c|c|c|c|}
		\hline
		Period & $\sum n_j$ & $\sum_{i=1}^{m^*} y_i$ & Premium\\
		\hline
		0 & 0 & 0 & 0.8840\\\hline
		1 & 0 & 0 & 0.6049\\\hline           
		2 & 2 & 2.52 & 0.8135\\\hline
		3 & 3 & 3.98 & 0.8445\\\hline
		4 & 3 & 3.98 & 0.7473\\\hline
		5 & 5 & 6.60 & 0.8361\\\hline
		6 & 6 & 8.07 & 0.8047\\\hline
		7 & 6 & 8.07 & 0.7360\\\hline
		8 & 8 & 10.59 & 0.7968\\\hline      
		9 & 9 & 11.82 & 0.7965\\\hline
		10 & 9 & 11.82 & 0.7423\\\hline
		11 & 11 & 14.50 & 0.7852\\\hline
		12 & 12 & 15.83 & 0.7813\\\hline
	\end{tabular}
\caption{Annual Bayesian premia with $(0, 2, 1)$ pattern}
\label{tab:prem_evol}
\end{table}
%
%
%\textcolor{blue}
%
\begin{figure}
\begin{center}
	\includegraphics[scale=0.3]{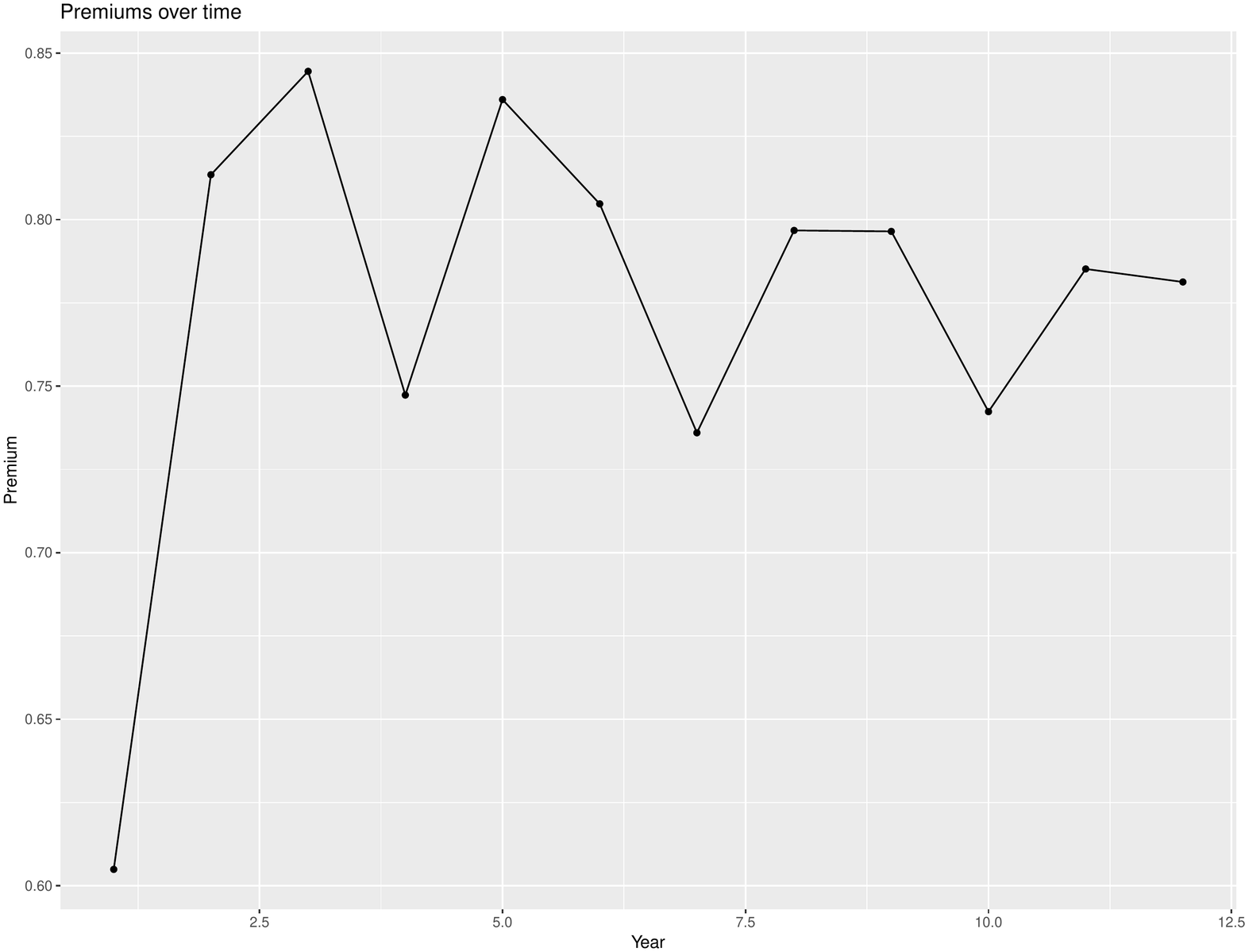}
	\includegraphics[scale=0.3]{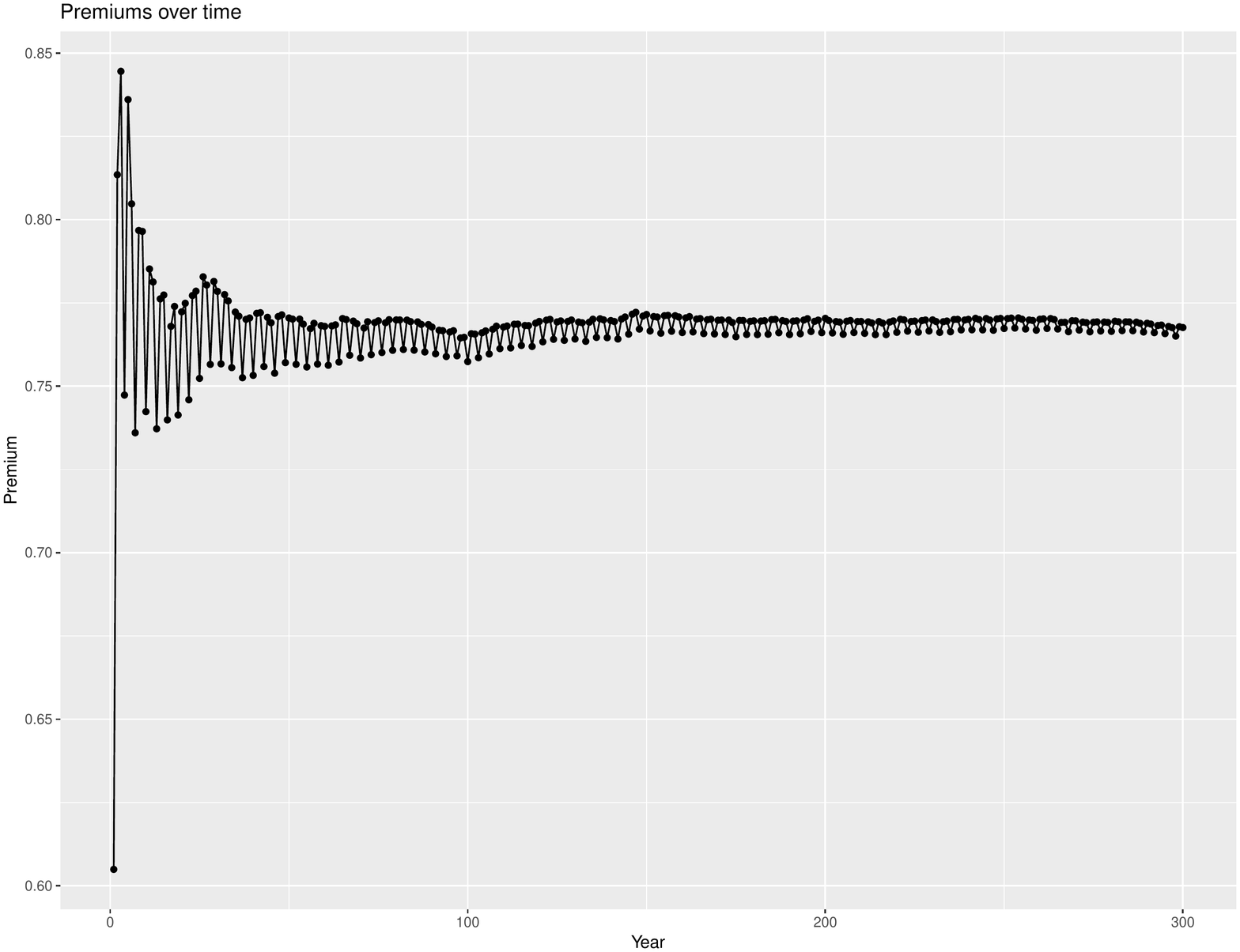}
\end{center}
\caption{Premium evolution with $(0, 2, 1)$ pattern}
\label{fig:prem_evol}
\end{figure}
%
%\textcolor{red}
%
{The pattern of the premia reflects the periodical pattern we assigned for the claim counts over years. Note that the no claim situation in a particular year leads to a significant decrease in the premium for the upcoming year, yet the amount of reduction diminishes over time. In addition, the rate of increase in the annual premium is generally higher with two claims when compared to that with one claim only, as expected. Looking at the graph on the right hand side, in the long run, the premium converges to a fixed value with the growth of cumulative claim counts and costs.
}
	
%	We can further check premia against cumulative claim counts and aggregate loss. This is shown in Figure~\ref{fig:prem_count_loss}.
%%
%%\begin{figure}
%%	\begin{center}
%%	\includegraphics[scale=0.3]{graphs/PremLoss.pdf}
%%\end{center}
%%\caption{Premia against aggregate loss}
%%\label{fig:prem_count_loss}
%%\end{figure}
%%
%%\textcolor{blue}
%{They have shown roughly linear relationships in premium and the total claim counts as well as premium and the aggregate claim cost. This might lead us to look further into theory in the future.}
%%
%%\textcolor{blue}

{The premium function depends on two quantities which are functions of past claim records. They are function $G(\cdot)$ and $\varphi(\cdot)$ from \eqref{eq:w} and \eqref{eq:omega}, respectively. Now, we turn to look at some numerical results for them and the corresponding weight functions they generate, respectively.}
%
%\subsection*{G function}
%
%\textcolor{blue}
%
{We first show how $G(\cdot)$ varies according to years elapsed. It can be seen that the values of $G(\cdot)$ also behaves in a periodical manner due to the scenario setting, and eventually the speed of growth slows down. We refer to Figure~\ref{fig:G_asym} and on the left hand side graph.
Graph on the right hand side is the weight function governed by $G(\cdot)$. That shows the weights in the mixture posterior distribution for the claim frequency component. A large $G(\cdot)$ value implies a smaller weight assigned for the historical risk stream. Both graphs demonstrate that $G(\cdot)$ as well as the historical weight will converge to a fixed value ultimately with extensive fluctuations in earlier years. One should be aware that over time cumulative claim counts and costs are non-decreasing. The graphs also verify the asymptotics of $G(\cdot)$ and the weight function when accumulated claim counts $m^* = \sum_j n_j \rightarrow\infty$, as mentioned earlier.}
\begin{figure}
\begin{center}
	\includegraphics[scale=0.3]{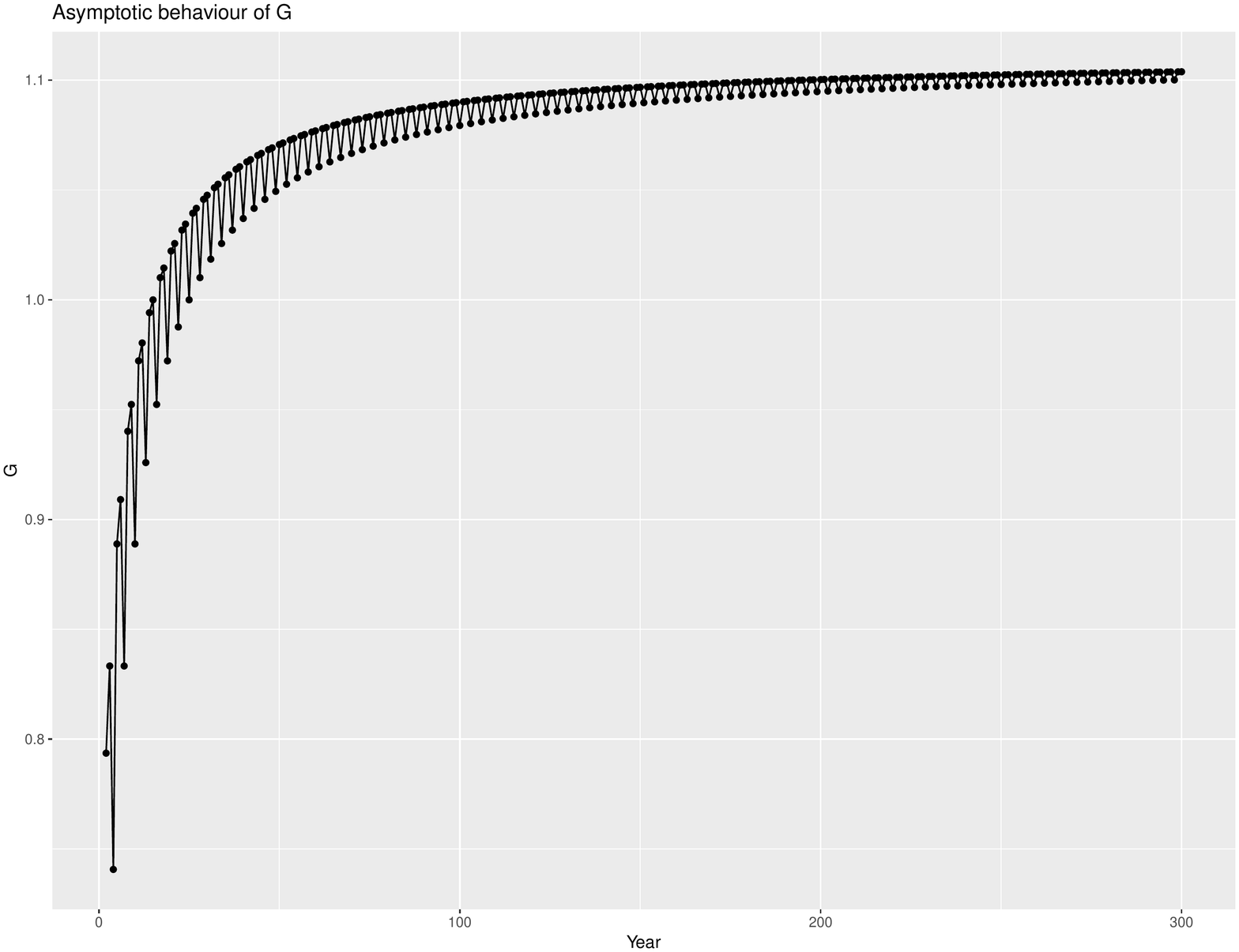}
	\includegraphics[scale=0.3]{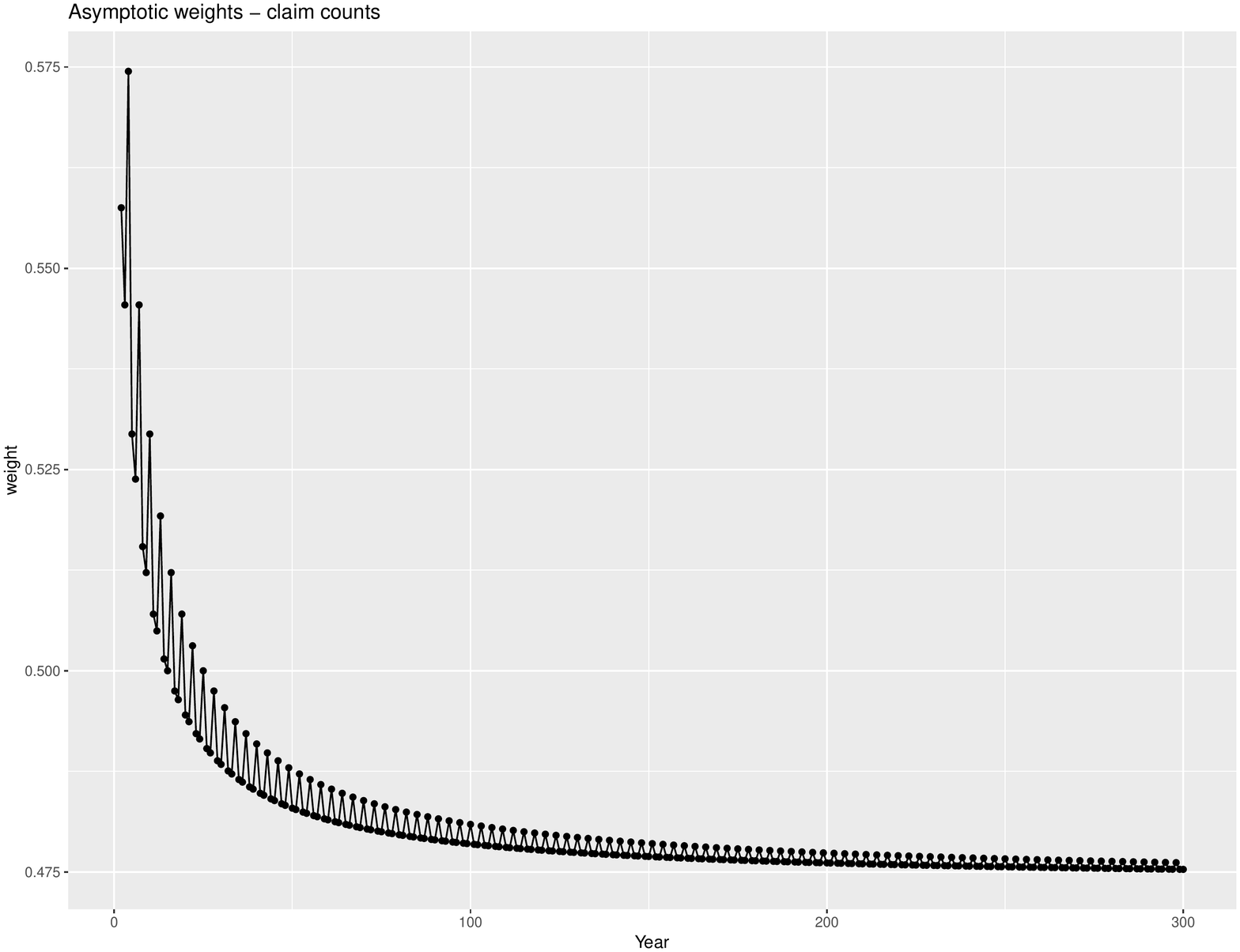}
\end{center}
\caption{Asymptotic behaviour of function $G$ and the posterior weight generated by $G$}
\label{fig:G_asym}
\end{figure}

Next we address the $\varphi$ and its associated weight function. Similarly, we can justify the asymptotic result of $\varphi(\cdot)$ when $\sum_i y_i \rightarrow 0$ is given by a fixed value dependent on $\nu$. Additionally, $\varphi(\cdot)$ has a different behaviour compared to $G(\cdot)$ overtime. See Figure~\ref{fig:phi_asym_claims}. Firstly, $G(\cdot)$ and $\varphi(\cdot)$ differ in convexity, which leads to opposite asymptomatic directions. Secondly, when $\varphi(\cdot)$ gets larger, the variations tend to be slightly larger as well. 
Graph on the right shows the weight function for posterior distribution governed by $\varphi(\cdot)$. When the weight is larger, claim sizes are dominated by the exponential part in the mixture. And a smaller weight implies dominance by the gamma part. We could see that the weight starts at values closer to 1 and gradually lean towards values closer to 0. That means as time goes by, more weights are being assigned to the unforseeable risk stream when calculating premiums.
\begin{figure}
\begin{center}
	\includegraphics[scale=0.3]{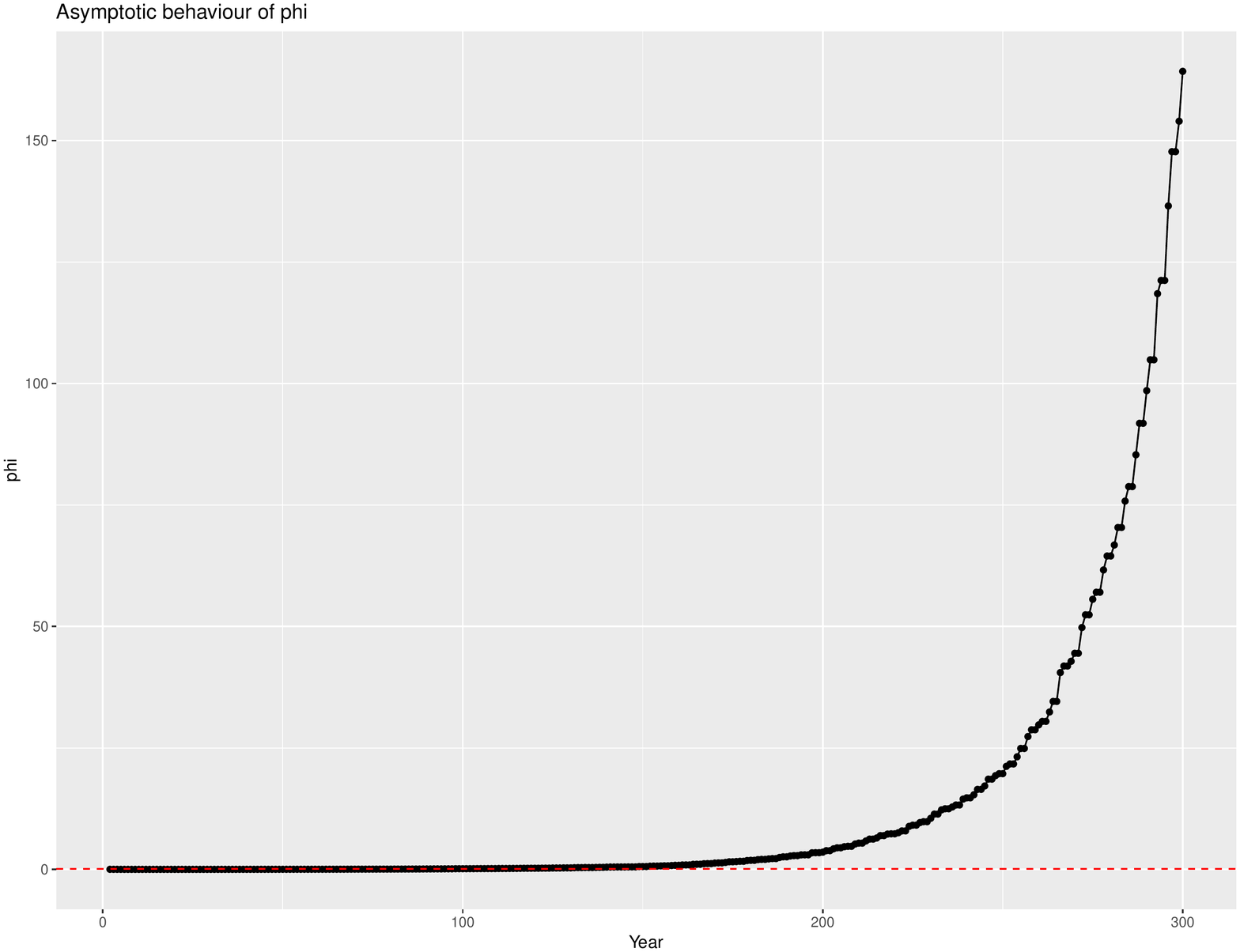}
	\includegraphics[scale=0.3]{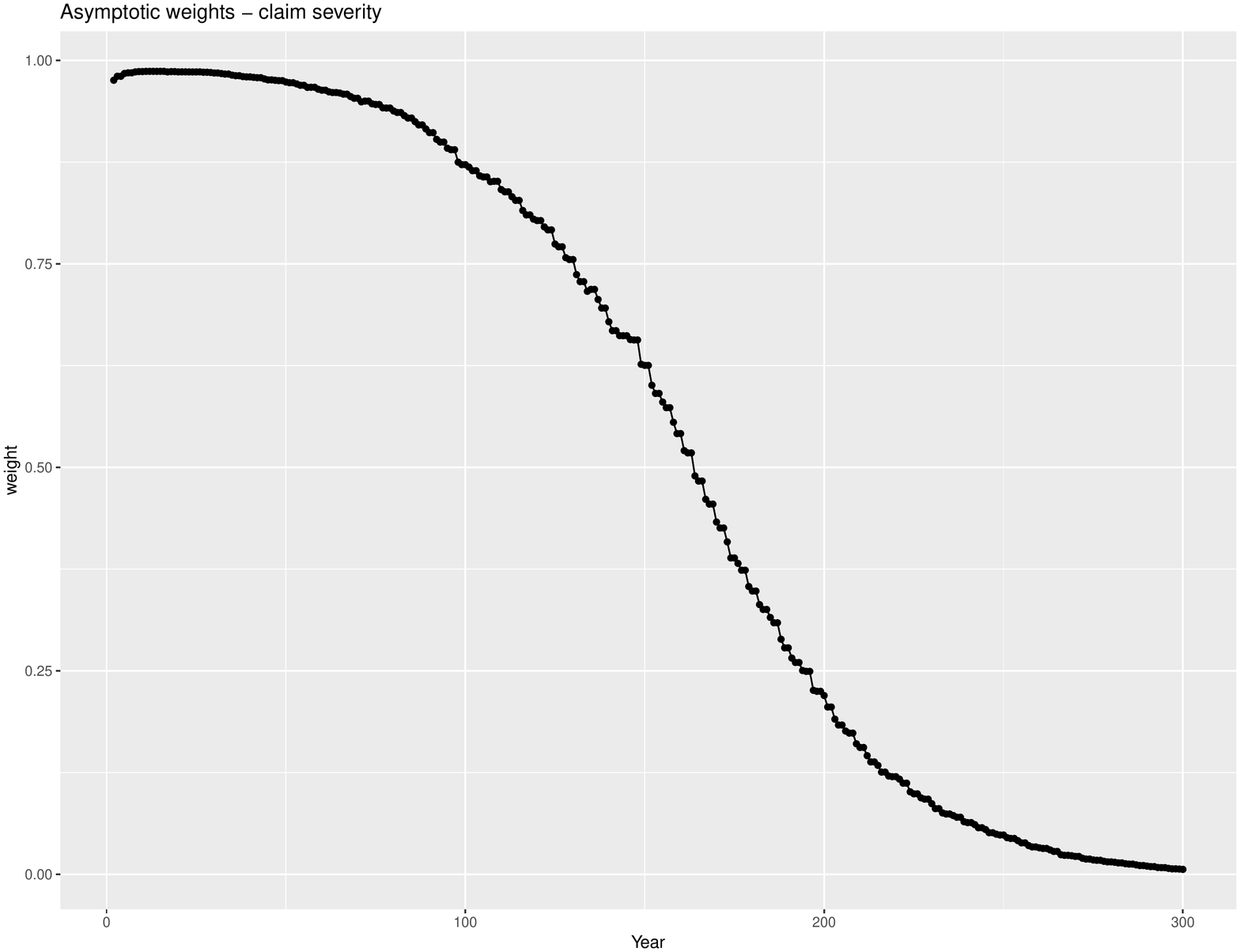}
\end{center}
\caption{Asymptotic behaviour of function $\varphi$ and the posterior weight generated by $\varphi$}
\label{fig:phi_asym_claims}
\end{figure}

%%%
%%%
\section{Parameter estimation and EM algorithm}
\label{s:estima}
We have seven prior parameters to estimate, the direct use of the Maximum Likelihood Estimation (briefly MLE) method is not workable. Looking a the literature we found that the use the Expectation-Maximization (EM) algorithm suited our purpose, it is said to find also, say, \textit{best fit} estimates. It is an iterative way to approximate Maximum Likelyhood Estimates (MLE), it uses a multi-step process. It is said to be a method to find MLE for model parameters in the presence of  incomplete data [see \cite{dempster1977maximum}], has missing data points, or when existing unobserved (hidden) latent variables. See also \cite{couvreur1997algorithm}.

It works by choosing random values, guesses, for the missing data points,  then uses those guesses to estimate a second set of data. 
The new values are used to create a better guess for the first set, and the process continues until the algorithm converges to a fixed point.

 In this section we explain briefly how we employed the EM algorithm to estimate the prior parameters of our model based on a data set from a Portuguese insurer. Unfortunately we could not find a data set having both the claim counts and the corresponding severities. Often data is recorded on aggregate. The implementation of the algorithm depend on the distribution choice. We assumed the use of a simple distribution as a choice for the severity, other realistic choices may bring other problems to the estimation procedure, like numerical ones. However, some other choices are doable likewise.
\subsection{Data description}
We were provided with quarterly claim count data from a motor insurance portfolio. Namely, it records the total number of claims arising from the portfolio every quarter and there were a total of approximately 180 quarters recorded. Therefore, we could treat each entry as an observation for $N(t)$ for a fixed $t$ which stands for a quarter here. We start to write $N$ for short in the sequel since $N(t)$ is a random variable for fixed $t$. We took care of the Third Party Liability claims only as those are the obligatory components for a car insurance policy. It was assumed that the portfolio is closed over the underlying period and that claim counts for each quarter are independent observations for $N$.

Table~\ref{t:ctstats} shows a brief summary of the data under consideration. The last two columns show the standard deviation and the coefficient of variation for this dataset. Its distribution can be also visualised in the histogram shown by Figure~\ref{fig:hist1}. Clearly, there is a separation around 6000, which serves as a clue for the adoption of a mixture model as we theoretically derived above. We will explain next in more details about the estimation procedures.
\begin{table}[b]
	\center
	\begin{tabular}{cccccccc}
		\hline\hline
		{\bf Min.} & {\bf 1st Qu.} & {\bf Median} & {\bf Mean} & {\bf 3rd Qu.} & {\bf Max.} & {\bf S.D.} & {\bf C.V.}\\
		3685 & 4879 & 5373 & 5552 & 6432 & 7316 & 902.844 & 0.1626\\
		\hline
	\end{tabular}
	\caption{Summary of statistics}
	\label{t:ctstats}
\end{table}
\begin{figure}%[b]
	\center
	\includegraphics[scale=0.3]{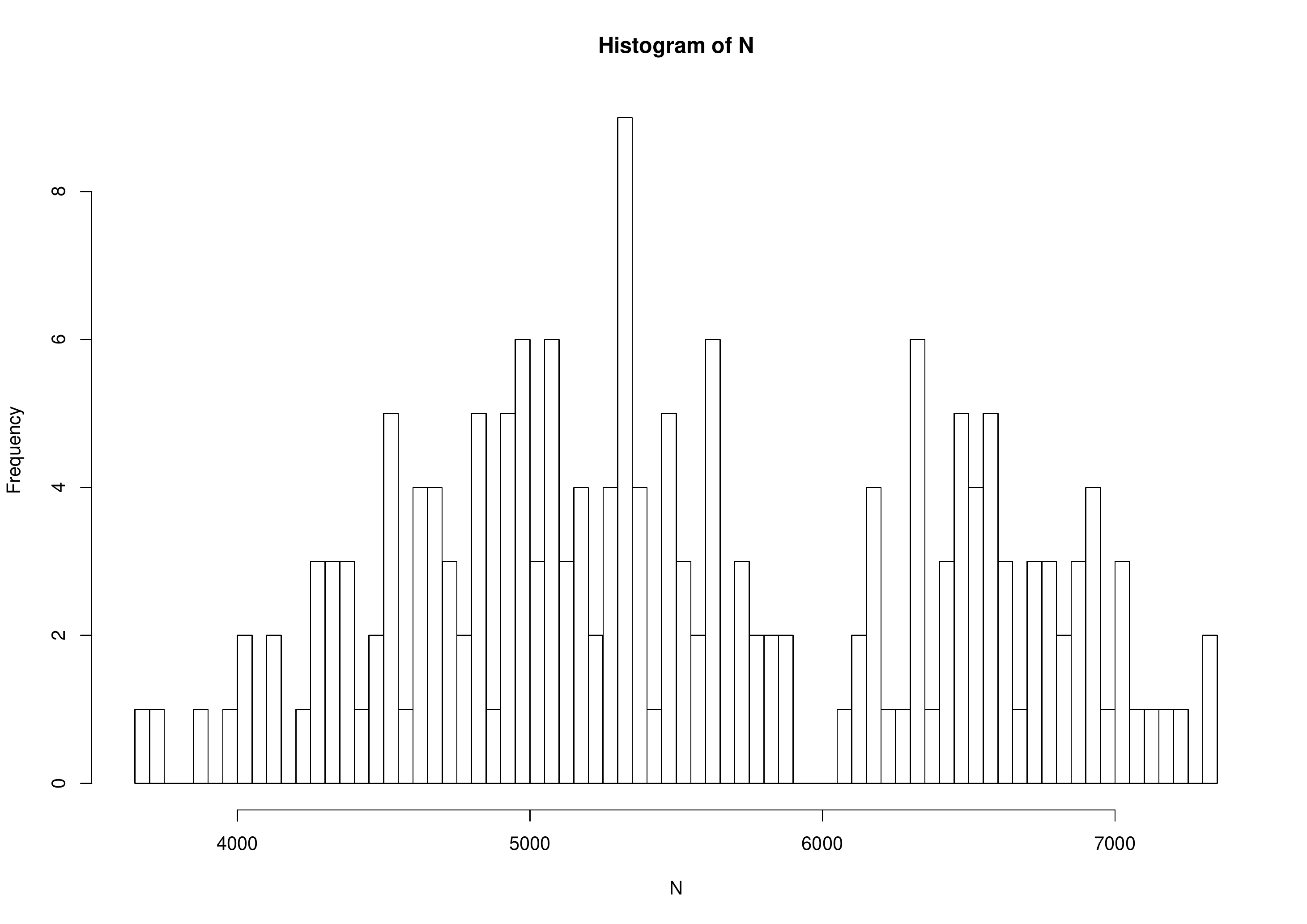}
	\caption{Histogram of claim counts}
	\label{fig:hist1}
\end{figure}
\subsection{EM algorithm}
\subsubsection{Claim frequency component}
\label{sec:emfreq}
Recall that we work with a mixture model and that we only observe a global $N_i \in \mathbb{N}$, we mean, we do not know which negative binomial distribution it comes from. Here, we introduce a latent variable, denoted as $Z$, representing the missing information of which distribution it takes for each observation. Since we only have  two mixed distributions, $Z\frown Bernoulli(p)$ is a Bernoulli random variable with $\mathbb{P}\{Z = 1\} = q=1-\mathbb{P}\{Z = 0\}$. Now, the complete information is given by vector $\{N, Z\}$. We can write the complete likelihood function as follows.
\begin{eqnarray*}
%	&& \nonumber\\
L(\theta|N, Z)	&=& \prod_{i=1}^{m} \left[p\binom{n_i+\alpha_1-1}{n_i}\left(\frac{\beta}{\beta +1}\right)^{\alpha_1}\left(\frac{1}{\beta + 1}\right)^{n_i} \right]^{z_i}\\
	& & \hspace{0.5cm}
	 \times\left[(1-p)\binom{n_i+\alpha_1+\alpha_2-1}{n_i}\left(\frac{\beta}{\beta + 1}\right)^{\alpha_1+\alpha_2}\left(\frac{1}{\beta + 1}\right)^{n_i} \right]^{1-z_i}\, ,
\end{eqnarray*}
and according to the law of total probability we have, 
$$\mathbb{P}\{N\} = \mathbb{P}\{N | Z = 1\}\mathbb{P}\{Z = 1\} + \mathbb{P}\{N | Z = 0\}\mathbb{P}\{Z = 0\}.$$
Here, $\theta = \{\alpha_1, \alpha_2, \beta, p\}$ are the parameters we are interested in estimating for the claim count distribution. Correspondingly, the log-likelihood function is given by, from above,
\begin{eqnarray}
\mathcal{L}(\theta|N, Z) &=& \sum_i z_i \log\left[p\binom{n_i+\alpha_1-1}{n_i}\left(\frac{\beta}{\beta + 1}\right)^{\alpha_1}\left(\frac{1}{\beta + 1}\right)^{n_i} \right]\nonumber\\
&& \hspace{0.35cm}
+\sum_i (1-z_i)\log \left[(1-p)\binom{n_i+\alpha_1+\alpha_2-1}{n_i}\left(\frac{\beta}{\beta + 1}\right)^{\alpha_1+\alpha_2}\left(\frac{1}{\beta + 1}\right)^{n_i} \right].
\label{eqn:loglikelihood}
\end{eqnarray}
%
%\noindent\underline{EM Algorithm}\\

We summarize how the algorithm works. It is an iterative process where each iteration consists of two steps, the E-step and the M-step, standing for Expectation and Maximisation, respectively.
\begin{enumerate}
	\item We begin with an initially determined parameter values $\theta^{(0)} = \{\alpha_1^{(0)}, \alpha_2^{(0)}, \beta^{(0)}, p^{(0)}\}$.
	\item E-Step %\\
	
	For the $(l+1)$-{th} iteration, $l = 0, 1, \dots$ , we first seek for the expected value of $Z_i$ conditional on the observations together with the current parameter estimates $\theta^{(l)}$, i.e., estimates from the previous $l$-{th} iteration. This is denoted as $ \tau_i $,
	\begin{eqnarray*}
		\tau_i &=& \mathbb{E} [Z_i| N,\theta^{(l)}] = 1 \times \mathbb{P}\{Z_i = 1|N_i,\theta^{(l)}\} + 0 \times \mathbb{P}\{Z_i = 0|N_i,\theta^{(l)}\} = \frac{\mathbb{P}\{Z_i = 1, N_i=n_i|\theta^{(l)}\}}{\mathbb{P}\{N_i| \theta^{(l)}\}}\\[0.15cm]
		&=&\frac{p^{(l)}\binom{n_i+\alpha^{(l)}_1-1}{n_i}\left(\frac{\beta^{(l)}}{\beta^{(l)} + 1}\right)^{\alpha^{(l)}_1}\left(\frac{1}{\beta^{(l)} + 1}\right)^{n_i}}{p^{(l)}\binom{n_i+\alpha^{(l)}_1-1}{n_i}\left(\frac{\beta^{(l)}}{\beta^{(l)} + 1}\right)^{\alpha^{(l)}_1}\left(\frac{1}{\beta^{(l)} + 1}\right)^{n_i} + (1-p^{(l)})\binom{n_i+\alpha^{(l)}_1+\alpha^{(l)}_2-1}{n_i}\left(\frac{\beta^{(l)}}{\beta^{(l)} + 1}\right)^{\alpha^{(l)}_1+\alpha^{(l)}_2}\left(\frac{1}{\beta^{(l)} + 1}\right)^{n_i}}
	\end{eqnarray*}
	
	Subsequently, based on  Expression~\eqref{eqn:loglikelihood}, we compute the expectation the log-likelihood function with respect to the conditional distribution of $Z$, given $N$ under the current estimates $\theta^{(l)}$, denoted as $ Q(\theta |\theta^{(l)}) $. Considering we have we have $m$ independent observations, we have
	\small{\begin{eqnarray}
		&& Q(\theta |\theta^{(l)}) = \mathbb{E}_{Z_i | N_i, \theta^{(l)}}[\mathcal{L}(\theta | N_i,Z_i)]\label{eqn:expectation}\\[0.15cm]
		&=&\sum_{i=1}^{m}\tau_i \left[\log p^{(l)} + \log\binom{n_i+\alpha_1^{(l)}-1}{n_i}\ + \alpha_1^{(l)}\log \frac{\beta^{(l)}}{\beta^{(l)} + 1} + n_i \log \frac{1}{\beta^{(l)} + 1}\right] \nonumber\\
		&& + \sum_{i=1}^{m} (1-\tau_i) \left[\log (1- p^{(l)}) + \log\binom{n_i+\alpha_1^{(l)} + \alpha_2^{(l)} -1}{n_i} + (\alpha_1^{(l)}+\alpha_2^{(l)})\log \frac{\beta^{(l)}}{\beta^{(l)} + 1} + n_i \log \frac{1}{\beta^{(l)} + 1}\right]\nonumber\\[0.15cm]
		&=& \log p^{(l)} \sum_{i}\tau_i + \sum_i\tau_i \log\binom{n_i+\alpha_1^{(l)} -1}{n_i} + \alpha_1^{(l)} m\log \frac{\beta^{(l)}}{\beta^{(l)} + 1}+ \log \frac{1}{\beta^{(l)} + 1}\sum_i n_i\nonumber\\
		&& + \log (1- p^{(l)}) \sum_{i} (1-\tau_i) + \sum_{i} (1-\tau_i)\log\binom{n_i+\alpha_1^{(l)} + \alpha_2^{(l)} -1}{n_i} +\alpha_2^{(l)}\log \frac{\beta^{(l)}}{\beta^{(l)} + 1}\sum_{i} (1-\tau_i).\nonumber
		\end{eqnarray}}
	\item M-Step\\
	The maximisation step is set to find the parameter values that maximises function  \eqref{eqn:expectation} and they become the estimates to be used in the next iteration, i.e.,
	\begin{equation}
	\theta^{(l+1)} =\operatornamewithlimits{argmax}_{\theta} Q(\theta |\theta^{(l)})
	\label{eqn:max}
	\end{equation}
	For this, we take the gradient of $Q(\theta |\theta^{(l)})$, equate to zero and solve for $\{\alpha_1, \alpha_2, \beta, p\}$ simultaneously.
	\begin{eqnarray}
	\frac{\partial Q}{\partial \alpha_1} &=& \sum_i \tau_i [\digamma(n_i+\alpha_1) - \digamma(\alpha_1)] + \sum_i(1-\tau_i)[\digamma(n_i+\alpha_1+\alpha_2) - \digamma(\alpha_1 + \alpha_2)] + m\log\frac{\beta}{1+\beta}=0;\nonumber\\
	\frac{\partial Q}{\partial \alpha_2} &=& \sum_i(1-\tau_i)\left[\digamma(n_i+\alpha_1+\alpha_2) - \digamma(\alpha_1 + \alpha_2)] + \log\frac{\beta}{1+\beta}\right]=0;\nonumber\\
	\frac{\partial Q}{\partial \beta} &=& \frac{\alpha_1 m + \alpha_2\sum_i(1-\tau_i) - \beta\sum_i n_i}{\beta(1+\beta)}=0;\nonumber\\
	\frac{\partial Q}{\partial p} &=& \frac{\sum_i\tau_i}{p} - \frac{\sum_i(1-\tau_i)}{1-p}=0,\label{eqn:partialp}
	\end{eqnarray}
	where $\digamma(\cdot)$ is the digamma function denoting the logarithmic derivative of a gamma function, i.e.,
	\[
	\digamma(x) = \frac{d}{d x}\log(\Gamma(x)) = \frac{\Gamma'(x)}{\Gamma(x)}.
	\]
	The estimates for the subsequent iteration $(l+1)$-{th} will be the solutions of the above equations. Equation \eqref{eqn:partialp} does not depend on others than parameter $p$, it can be solved directly with explicit representation
	\[
	p^{(l+1)} = \frac{\sum_i \tau_i}{m}.
	\]
	For the other three parameters, we can only solve numerically. The {\it nleqslv} package in R was employed here, it  solves systems of non-linear equations. As a consequence, we can obtain estimated parameters at this iteration $\theta^{(l+1)} = \{\alpha_1^{(l+1)}, \alpha_2^{(l+1)}, \beta^{(l+1)}, p^{(l+1)}\}$.
	\item Plug $\theta^{(l+1)}$ into the $(l+2)$-{th} iteration and repeat the above steps until convergence.
\end{enumerate}

Initiating values using the method of moments yields $\alpha_1^{(0)} = 96.14042;\, \alpha_2^{(0)} = 31.54888; \, \beta^{(0)} = 0.01927362;\,  p^{(0)}= 0.6555556$. Then we implemented EM algorithm on the chosen dataset mentioned earlier. At the tolerance level of $0.001$, we reached convergence with 75 iterations and the resulting estimates for $\alpha_1, \alpha_2, \beta, p$ are
\[
\alpha_1 = 97.55820446;\, \alpha_2 = 30.14706672;\, \beta = 0.01978072;\,  p = 0.5929959.
\]
Based on these parameters, we randomly generated a sequence of numbers and compared them with the observed data. We put together both histograms, in different colors, in Figure \ref{fig:hist2}.  Visually it appears to be a good fit.
\begin{figure}
	\center
	\includegraphics[scale=0.3]{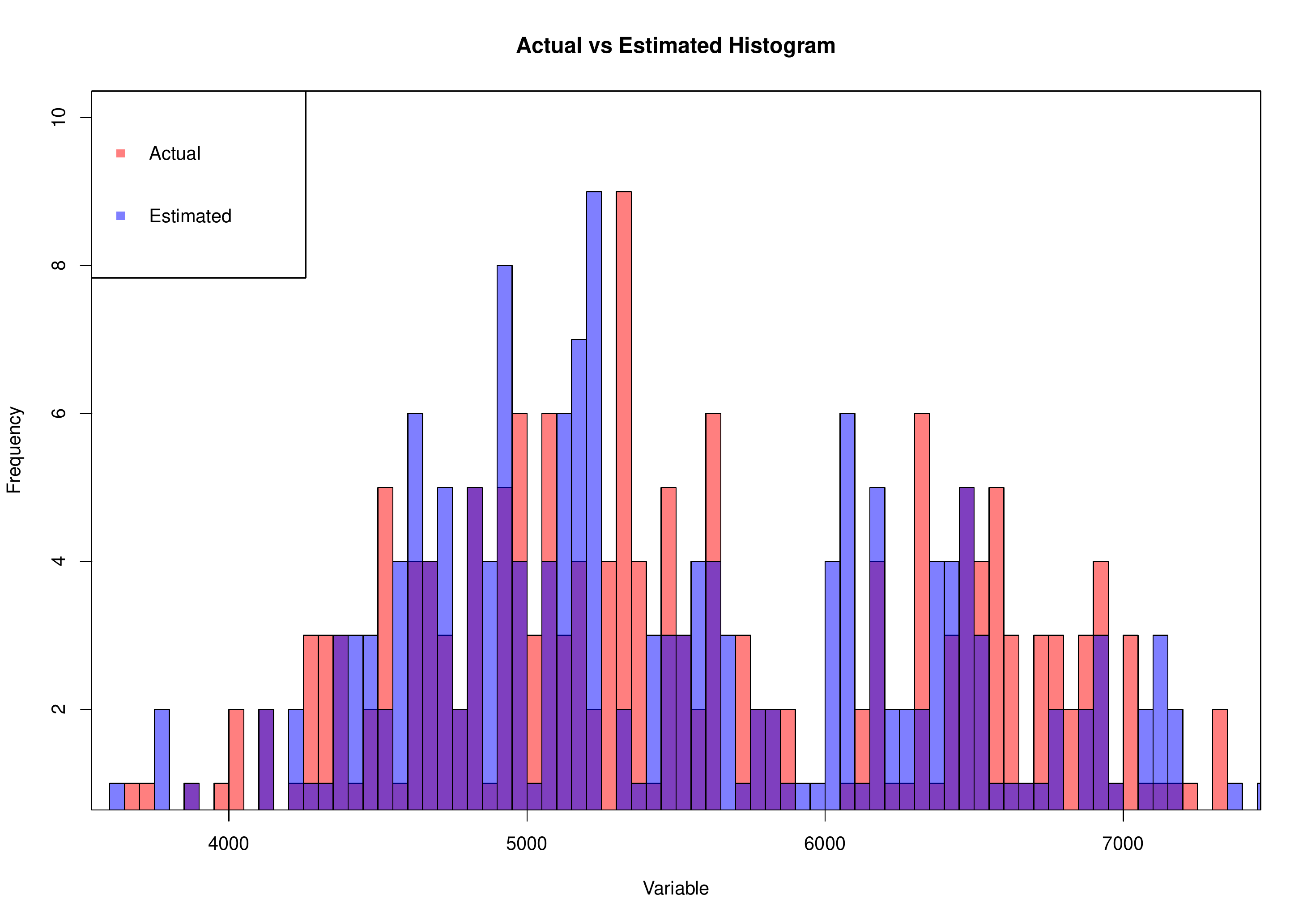}
	\caption{Histogram Actual \textit{vs.}~Estimated}
	\label{fig:hist2}
\end{figure}
Note that, however, this result stemmed from initial values derived from moments. In addition, we found that estimates vary much with different chosen initial values. For instance, if we begin with $\alpha_1^{(0)} = 150; \alpha_2^{(0)}  = 20; \beta^{(0)}  = 0.02.$ Convergence happened at the $308^{th}$ iteration at the same toleration level above. Nevertheless, corresponding estimates lie closer to the initial values.
\[
\alpha_1 = 141.98069243;  \alpha_2 = 28.02122426; \beta = 0.02773337, p = 0.5723845.
\]
As can be seen in Figure \ref{fig:hist3}, we still have a quite good fit visually. But we could already tell that it possibly does not provide a fitting as good as, if not worse than the previous one. %In fact, this is reflected in the likelihood values with the former and later one to be $-1501.472$ and $-863.3071$, respectively.
\begin{figure}
	\center
	\includegraphics[scale=0.3]{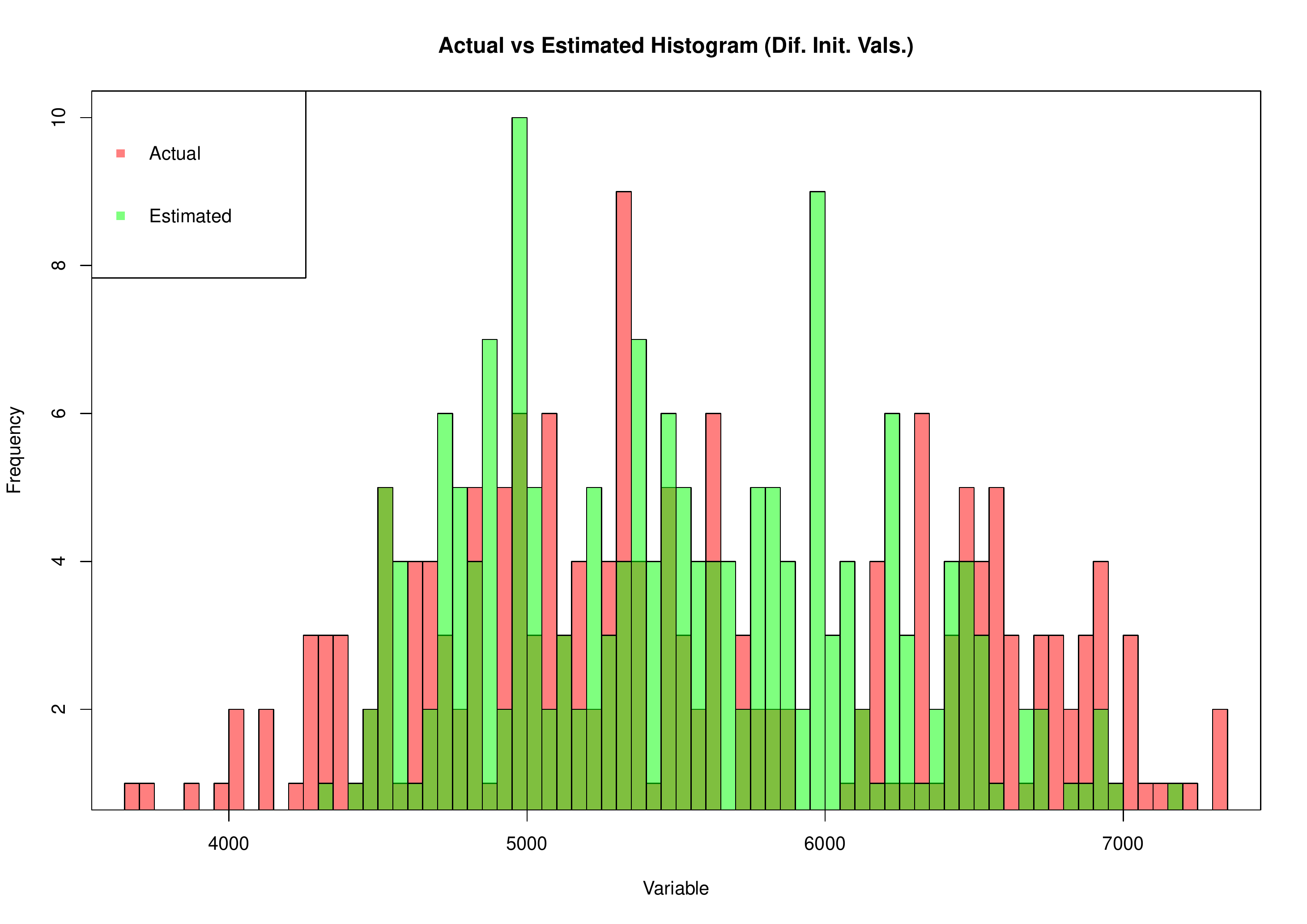}
	\caption{Histogram Actual \textit{vs.}~Estimated using a different set of init. vals}
	\label{fig:hist3}
\end{figure}

On the other hand, the Kolmogorov-Smirnov and Chi-square test results are summarised in Table~\ref{table:tests}. 
\begin{table}[t]
	\centering
	\begin{tabular}{|c|c|c|}
		\hline
		\thead{Reference Distribution} & \thead{Kolmogorov-Smirnov test} & \thead{Chi-square test}\\ \hline
		\makecell{NB1(97.55820446, 0.01978072) \\and NB2(127.70527118, 0.01978072)} & 0.3983 & 0.2769 \\ \hline
		\makecell{NB1(141.98069243, 0.02773337) \\and NB2(170.00191669, 0.02773337)} & 0.006767 & 0.0004998\\ \hline
	\end{tabular}
\caption{Summary of goodness-of-fit tests}
	\label{table:tests}
\end{table}
In the former case, both $ p $-values are greater than 25\%. That is to say, we cannot reject the null hypothesis that the observed data agree with the proposed model at 25\% significance level. On the contrary, neither tests present $ p $-values greater than 0.01 for the second example. Therefore, we could conclude the second model does not fit the data well even at 1\% significance level.
\subsubsection{Claim severity component}
We are now ready to implement the steps of EM algorithm for estimating parameters in the claim severity model. Before proceeding to the EM steps, let us look into the parameter $\nu$. In fact, $\nu$ is connected to the parameters in the claim frequency component. Therefore, once the sample of claim counts is given, we would get an estimated value for $\nu$. Recall from previous sections that,
\begin{equation}
\nu = p + (1-p)\cdot\frac{B(\alpha_1+1, \alpha_2)}{B(\alpha_1, \alpha_2)}\, .
\label{eq:nu}
\end{equation}
%
%
%\noindent{\bf Using $\nu$ estimated from claim counts}\\
%
If we substitute in \eqref{eq:nu} the estimated values of $p,\, \alpha_1,\, \alpha_2$, we could obtain that the point estimation for $\nu$ would be $\nu = 0.9039196$. $\nu$ could then be treated as a constant onwards, and we follow an iteration procedure similar  to that presented in the previous subsection. As follows,
\begin{enumerate}
	\item We begin with an initially determined parameter values $\vartheta^{(0)} = \{\mu^{(0)}, \delta^{(0)}, \sigma^{(0)}\}$.
	\item E-Step %\\
	
	For the $(l+1)$-{th} iteration, $l = 0, 1, \ldots$,  we first seek for the expected value of the random value $U_i$, representing the latent variable, conditional on the observations together with the current parameter estimates $\vartheta^{(l)}$, i.e., estimates from the $l$-{th} iteration. Let $ \tau_i $ denote this expectation
	\begin{eqnarray*}
		\tau_i &=& \mathbb{E} [U_i| { Y,\vartheta^{(l)}}] = 1\times \mathbb{P}\{U_i = 1 | Y_i,\vartheta^{(l)}\} + 0 \times \mathbb{P}\{U_i = 0 | Y_i,\vartheta^{(l)}\} \\[0.15cm]
		&=&  \frac{\mathbb{P}\{U_i = 1, Y_i|\vartheta^{(l)}\}}{\mathbb{P}\{Y_i, \vartheta^{(l)}\}}
		=
		\frac{\nu \mu^{(l)} e^{-\mu^{(l)} y_i}}{\nu \mu^{(l)} e^{-\mu^{(l)} y_i} + (1-\nu)\frac{\delta^{(l)} {\sigma^{(l)}}^{\delta^{(l)}}}{(\sigma^{(l)}+y_i)^{\delta^{(l)}+1}}}
	\end{eqnarray*}
	Subsequently, we compute the expectation of its log-likelihood function with respect to the conditional distribution of $U$ given $Y$ under the current estimates $\vartheta^{(l)}$. Considering that we have $m^\ast$ independent observations, we have,
	\begin{eqnarray}
	Q(\vartheta |\vartheta^{(l)}) &= & \mathbb{E}_{U_i | Y_i, \vartheta^{(l)}}[\mathcal{L}(\vartheta | Y_i, U_i)]\label{eqn:clm_expectation}\\
	&=&\sum_{i}^{m^\ast}\tau_i \left[\log \nu + \log \mu^{(l)} -\mu^{(l)} y_i\right]\nonumber\\
	&& \hspace{0.25cm}+ \sum_{i}^{m^\ast} (1-\tau_i) \left[\log (1- \nu) + \log \delta^{(l)} +\delta^{(l)}\log \sigma^{(l)}-(\delta^{(l)}+1) \log(\sigma^{(l)}+y_i)\right]\nonumber\\[1.5mm]
	&=& \log \nu \sum_{i}\tau_i + \log\mu^{(l)} \sum_i \tau_i -\mu^{(l)} \sum_i \tau_i y_i  + \log (1- \nu) \sum_{i} (1-\tau_i) \nonumber\\
	&& + \log\delta^{(l)} \sum_{i} (1-\tau_i) +\delta^{(l)} \log \sigma^{(l)} \sum_{i} (1-\tau_i) - (\delta^{(l)}+1) \sum_{i} (1-\tau_i) \log(\sigma^{(l)}+ y_i).\nonumber
	\end{eqnarray}
	\item M-Step
	
	The maximisation step is set to find the parameter values that maximises \eqref{eqn:clm_expectation} and they are the estimates to be used in the next iteration, i.e.,
	\begin{equation}
	\vartheta^{(l+1)} =\operatornamewithlimits{argmax}_{\vartheta} Q(\vartheta |\vartheta^{(l)})
	\label{eqn:clm_max}
	\end{equation}
	In order to do this, we  take the gradient of $Q(\vartheta |\vartheta^{(l)})$, equate to zero and solve the system for $\{\mu, \delta, \sigma\}$ simultaneously.
	\begin{eqnarray}
	\frac{\partial Q}{\partial \mu} &=& \frac{\sum_i \tau_i}{\mu}-\sum_i \tau_i y_i =0;\nonumber\\
	\frac{\partial Q}{\partial \delta} &=& \frac{n-\sum_i\tau_i}{\delta} + n\log\sigma -\log \sigma\sum_i \tau_i -\sum_i (1-\tau_i) \log(\sigma+y_i) =0 ;\nonumber\\
	\frac{\partial Q}{\partial \sigma} &=& \frac{(n-\sum_i\tau_i)\delta}{\sigma} -  (\delta+1)\sum_i \frac{1-\tau_i}{\sigma+y_i}=0 \, . \nonumber
	%\frac{\partial Q}{\partial \nu} &=& \frac{\sum_i\tau_i}{\nu} - \frac{\sum_i(1-\tau_i)}{1-\nu}.\label{eqn:clm_partialp}
	\end{eqnarray}
	
The estimates for the subsequent iteration $(l+1)$-{th} are the solutions to these equations. %It is obvious that \eqref{eqn:clm_partialp} does not depend on other parameters and it could be solved directly with explicit representation
	%\[
	%\nu^{(l+1)} = \frac{\sum_i \tau_i}{n}.
	%\]
	It is obvious that $\mu$ is independent from other parameters and could be solved directly with an explicit representation
	\[
	\mu^{(l+1)} = \frac{\sum_i \tau_i}{\sum_i \tau_i y_i}.
	\]
	For the other two parameters, we could only solve numerically. Again we apply the {\it nleqslv} package in R. As a consequence, we can obtain estimated parameters at this iteration $\vartheta^{(l+1)} = \{\mu^{(l+1)}, \delta^{(l+1)}, \sigma^{(l+1)}\}$.
	
	\item Plug $\vartheta^{(l+1)}$ into the $(l+2)$-{th} iteration and repeat the above steps until convergence.
\end{enumerate}

As we said, we do not have the claim severity data corresponding to the claim count one, we simulated those based on the claim counts data we used before. Then we have for claim counts in each period followed by a sequence of claim sizes randomly generated according to \eqref{eqn:clmdist} when $f(\cdot)$ and $g(\cdot)$ take $Exponential(\mu)$ and $Pareto(\delta, \sigma)$ forms, respectively.

Then, we implemented the algorithm and were able to achieve the desired parameters within reasonable amount of iterations. In Table~\ref{tab:est} we show our estimates against the predefined parameters. This verifies the effectiveness of our algorithm.
\begin{table}[h]
	\centering
	\begin{tabular}{|l|c|c|c|}
		\hline
		{\bf Parameters} & $\mu$ & $\delta$ & $\sigma$\\ \hline\hline
		Predefined & 1 & 2 & 0.5\\ \hline
		Initial value & 1.5 & 2.5 & 0.2\\ \hline
		After 1 iteration & 1.0018065 & 1.572045 & 0.1476012\\ \hline
		After 10 iterations & 0.9918480 & 2.011409 & 0.4177998\\ \hline
		After 110 iterations (at convergence) & 0.9999930 & 1.957350 & 0.4856047\\
		\hline
	\end{tabular}
\caption{Performance of estimation using EM algorithm on simulated claim severities without $\nu$}
	\label{tab:est}
\end{table}

% \noindent{\bf Estimating $\nu$ from the claim severity data}\\

As an alternative, we we have the option to estimate $\nu$ directly from claim size data. Implementing the EM algorithm to find this estimate $\hat{\nu}$ is very similar to that we applied to $p$ in Subsection~\ref{sec:emfreq}. Recall that the partial derivative equation with respect to $p$ in that subsection is independent from all other variables. A similar  situation exists here for $\nu$. We simply need to introduce one additional equation respecting ${\nu}$ estimation, note that it does not affect the remainder equations. As so, we find
\begin{equation}
\hat{\nu} = \frac{\sum_i \tau_i}{m^{\ast}}.
\end{equation}

After running the EM algorithm for this construction, we  obtain results in Table~\ref{tab:est2} and we can compare them with the ``true'' values.\\

\begin{table}[h]
	\centering
	\caption{Performance of estimation using EM algorithm on simulated claim severities with $\nu$}
	\begin{tabular}{|l|c|c|c|c|}
		\hline
		{\bf Parameters} & $\mu$ & $\delta$ & $\sigma$ & $\nu$\\ \hline\hline
		Predefined & 1 & 2 & 0.5 & 0.9039196\\ \hline
		Initial value & 1.5 & 2.5 & 0.2 & 0.9\\ \hline
		After 1 iteration & 1.000203 & 1.596950 & 0.1515525 & 0.9311160\\ \hline
		After 100 iterations & 1.005289 & 1.819941 & 0.3791028 & 0.9196332\\ \hline
		After 1552 iterations (at convergence) & 0.9991517 & 2.012381 & 0.4928861 & 0.904398 \\
		\hline
	\end{tabular}
	\label{tab:est2}
\end{table}

Notice that we have input the predefined value estimate for $\nu$ as the estimate from the claim count data. In this way, we could further consider the value estimated from this method, i.e., 0.904398 from Table~\ref{tab:est2}, to be a secondary estimate for $\nu$. Iteration reached convergence though at a much lower speed than before (intuitively, we could expect this). It still led to good estimation of all the parameters including $\nu$, converging closely to its ``true'' value, eventually.
\subsection{A short discussion}
To conclude, on the model and estimation procedure, this manuscript has carried out a develoment and and subsequent parameter estimation for the so-called unforeseeable risks discussed in \cite{li2015risk}. Precisely, for these risks, a probability $p$ has been assigned at mass point $\{0\}$ so that their corresponding counting process is distinguished from the classical one. Since we could only observe the entire claim counts from an insurance portfolio, this missing information could be estimated using the EM algorithm. Under certain assumptions for the distribution of heterogeneities within the portfolio (which was denoted as $\Lambda$ in this work), we could derive the randomness of the total claim counts given a fixed period to be a Negative Binomial mixture distribution. Thus, the likelihood function could be presented explicitly with which EM algorithm was implemented properly, based on a set of claim counts data for the third party liability insurance portfolio. However, the resulting estimates seem to be very sensitive to chosen initial values. Hence, we employed starting values which were computed via method of moments. Both the Kolmogorov-Smirnov and Chi-square tests suggest a good fit on the observed data.

We added to the initial model a study on missing information on the claim severity, assuming that it could bring some information, may be duplicate, on the unforeseeable stream. We could also apply a similar estimation procedure to that of the claim count one.  

According to the dataset considered here, the estimated probability of non-occurrence of claims per period is approximately 60\%. This is surprisingly larger than what we would expect, and we can argue about it. It may be due to the quality of the data set of course, but also note that it corresponds to the probability for a fixed time period which is a quarter here.  Nevertheless, it may show that many policies have not filed their claims to the system, yet. Also, Similar to the concerns usually put into the right tail extreme, i.e, extreme value theory, it could also be interesting to devote some particular atention to the left extreme. The latter one is what an insurer deals with on a daily basis, it could be related with the ``Bonus Hunge'' problem, common in motor insurance. It could be quite dangerous when someone does not show any claims before a large claim. This could be many small scratches unreported, leaving no history, that lead to a big accident. To detect the potential of policyholders moving from left towards right end tail would probably be more secure for an insurer. It also applies to introduction of new policies covering autonomous cars for instance where initial information is missing and accurate estimation of risks is needed. This study, however, serves as a starting tool to help identifying some of these risks.
\section{Combined parameter estimation and global likelihood}
In our model we could consider the calculation of Bayesian premia separately for claim counts and respective severities since we built a model where there is stochastic independence between these two quantities. However, it would be nice to compute a premium where we estimate parameters $\vartheta := (\alpha_1,\alpha_2, \beta, \mu, \delta, \sigma, p)$ altogether, using a likelihood where $N$ and $Y$ are jointly distributed. We will refer to it as the global likelihood function and it will serve as a direct extension of the current work.

In general, a random sample will be composed by a sequence of $m$ independent pairs of dependent observervations $(N_i, \overrightarrow{Y_i})$, where $N_i$ represents the number of claims in the $i$-{th} period and $\overrightarrow{Y_i} = (Y_{i1}, \ldots, Y_{in_i})$ is the corresponding sequence of claim severities. It is clear that if we consider in general that both claim counts and severities may bring information about each stream, foreseable or unforeseable, we should consider that stochastic dependence between $N_i$ and corresponding  $\overrightarrow{Y_i}$ is present. However, they are conditionaly independent, given $\Lambda=\Lambda_i\, $ ($i=1,2$) that is, for each individual stream we can consider the classical asumption of independence between claim counts and severities. 

Observed values are represented by corresponding lower case letters $(n_i, \overrightarrow{y_i})$. We also note that in each pair the dimension of vector $\overrightarrow{y_i}$ depend on the observed $n_i$. Now, we could write a likelihood function considering the joint random vector $(N, \overrightarrow{Y})$ assuming we have $m$ groups of observations and conditional independence of each $Y_{ij}$, $j = 1, \ldots, n_i$, for a given $i$:
\begin{eqnarray*}
	L(\vartheta | \mathbf{n, \overrightarrow{y}}) = \prod_{i=1}^m f_{N, \overrightarrow{Y}}(n_i, y_{i1}, \ldots, y_{in_i}) &=& \prod_{i = 1}^m f_{\overrightarrow{Y}|N}(\overrightarrow{y_i}| n_i) \mathbb{P}(n_i) = \prod_{i=1}^m \left\{\prod_{j=1}^{n_i} f_{\overrightarrow{Y}|N}(y_{ij}| n_i)\right\} \mathbb{P}(n_i)\,,
\end{eqnarray*}
where $\mathbb{P}(n_i)=\mathbb{P}(N=n_i)$, for simplification.

For our model, we can write the corresponding global log-likelihood function, denoted as $\mathcal{L}(\vartheta | \mathbf{n, \overrightarrow{y}})$. If  we let claim counts conform to Lemma~\ref{l:mix_nb} and claim severities follow \eqref{eqn:clmdist}, i.e.~$f(\cdot)$ and $g(\cdot)$ are respectively Exponential and Pareto densities, we have
{\small\begin{eqnarray*}
		\mathcal{L}(\vartheta | \mathbf{n, \overrightarrow{y}}) &=& \sum_{i=1}^m \log \mathbb{P}(n_i) + \sum_{i=1}^m \sum_{j=1}^{n_i} \log f_{\overrightarrow{Y}|N}(\overrightarrow{y_i}| n_i)\\
		&=& \sum_{i=1}^m \log\left\{p\binom{n+\alpha_1-1}{n}\left(\frac{\beta}{\beta + 1}\right)^{\alpha_1}\left(\frac{1}{\beta + 1}\right)^n \right.\\
		& & \;\;\; \;\;\; 
		\left. + (1-p)\binom{n+\alpha_1+\alpha_2-1}{n}\left(\frac{\beta}{\beta + 1}\right)^{\alpha_1+\alpha_2}\left(\frac{1}{\beta + 1}\right)^n\right\}
		\\
	    && 
		 \;\;\; \;\;\; \;\;\; \;\;\; +
		 \sum_{i=1}^m \sum_{j=1}^{n_i} \log \left\{ \nu \mu e^{-\mu y_{ij}} + (1-\nu) \frac{\delta \sigma^\delta}{(\sigma+ y_{ij})^{\delta + 1}}\right\}.
\end{eqnarray*}}

Instead of building a likelihood function based on a sample of observations of the pair $(N, \overrightarrow{Y})$, we can build it using interarrival time and corresponding severity, where each observation is a bivariate pair, denoted as $(T_j, Y_j)$, of dependent random varibles, in general. Let's denote the density and the distribution function of $T_j$ by $\phi(\cdot)$ and $\Phi(\cdot)$, respectively.

For finding the distribution of $(T_j)$ recall Lemma~\ref{l:mix_nb2} and that, conditional on $\Lambda =\lambda$, $\{N(t)\}$ is a  Poisson process with intensity $\lambda$. Then, given $\Lambda =\lambda$, conditional interarrival time $T_j|\Lambda =\lambda \frown Exponential (\lambda)$, with mean $\lambda^{-1}$. If Poisson parameter is an outcome of a random variable $\Lambda$ and is distributed as \eqref{eqn:gammamix}, we have that the unconditional distribution of $T_j$ is a mixture of two Pareto distributions whose density is given by 
	\begin{equation}
	\label{eq:mixpareto}
	\phi(t_j)=p\,\frac{\alpha_1\beta^{\alpha_1}}{(\beta+t_j)^{\alpha_1+1}}
	+ (1-p)\, \frac{(\alpha_1+\alpha_2)\beta^{\alpha_1+\alpha_2}}{(\beta+t_j)^{\alpha_1\alpha_2+1}}
	\end{equation}
		
On the other hand, the 
 density function of $Y_j$, conditional on a given $\Xi=\xi$, is given by the mixture, see~\eqref{eq:dfYxi},
	\begin{equation}
	\label{eq:mix_fg}
	h_\xi(y_j)= \xi f(y_j)+(1-\xi)g(y_j)\, ,
	\end{equation}
	and the unconditional density is given by \eqref{eqn:clmdist}. 
	 In particular, we assumed in our model that $f\frown exp(\mu)$, $g\frown Pareto(\delta,\sigma)$.
%	\end{equation*}

Recall Remark~\ref{rem:indep}, we know that
 with $\Xi=\xi$ fixed, $T_j$ and $Y_j$ are independent. A random sample of observations are made of independent pairs $(Y_j, T_j)$, each $j$ of dependent $Y_i$ and $T_i$. Each pair, $j$, has density function
\begin{eqnarray}
	\label{eq:Ltheta}
	\phi(y_i,t_i) & = & \int_{\xi}h_\xi(y_i)\phi(t_i)dA(\xi) \\
	& = & (1-p) \int_{\xi\neq 1}h_\xi(y_i)\phi(t_i)dA(\xi) + p\, h_1(y_i)
	\frac{\alpha_1\beta^{\alpha_1}}{(\beta+t_i)^{\alpha_1+1}} \,  \nonumber
\end{eqnarray}
 where $A(\xi)$ stands for the distribution function of $\Xi$. Note that that we separated the situations $\xi\neq 1$ and $\xi=1$. Also, the events are equivalent: $\{\Xi=1\}\Leftrightarrow \{ \Lambda^{(2)}=0\}$, so that $Pr\{\Xi=1\}=Pr \{ \Lambda^{(2)}=0\}=p$.

A short discussion on the distributions above, \eqref{eq:mixpareto} and \eqref{eq:mix_fg}, sounds apropriate. First, we consider the unconditional distribution of the arrival time (unconditional of $\Lambda$) however, we considered the conditional of the individual claim size $Y$, given $\Xi=\xi$. This ratio is given but that only means that $\Lambda^{(1)}$ is a given proportion of $\Lambda$, it remains random, it only means that the distribution of $\Lambda^{(1)}$ is a scaled distribution of that of $\Lambda$, or \textit{vice versa}. Looking at the density $h_\xi(y_i)$ we see that the split rate $\xi$ gives the weight for the foreseeable stream claim amount, and looking at the density $\phi(y_j)$ we could think that the probability $p$ gives a similar meaning regarding the claim count stream. It does not seem the case, it is not clear that the second part in the mixture represents the unforeseeable only.

Now, let's back to joint density~\eqref{eq:Ltheta}. We build the likelihood function over $m^\ast$ pairs of observations ($m^\ast$ may be different from sample size $m$ from random vector $(N, \overrightarrow{Y})$ above):
\begin{eqnarray*}
L(\vartheta) &= &\prod_{i=1}^{m^\ast}\Phi(y_i,t_i)=
\prod_{i=1}^{m^\ast} \int_{\xi}h_\xi(y_i)\phi(t_i)dA(\xi) \\
& = & \prod_{i=1}^{m^\ast} \left[
(1-p)\int_{\xi\neq 1} \left(\xi f(y_i)+(1-\xi)g(y_i)\right) \phi(t_i)Beta(\xi;\alpha_1,\alpha_2)d\xi +p\, f(y_i)\, \frac{\alpha_1\beta^{\alpha_1}}{(\beta+t_i)^{\alpha_1+1}}
\right] ,
\end{eqnarray*}
where $Beta(\xi;\alpha_1,\alpha_2)$ denotes the Beta density function of $\Xi$, given $\Xi\neq 1$, and $\frac{\alpha_1\beta{\alpha_1}}{(\beta+t_i)^{\alpha_1+1}}$ is the density $\phi(t_j)$ when $\xi=1$.
Remark that we cannot interchange the product ($\prod_{i = 1}^{m^\ast}$) and the integral. 
%%%
%%%
%\nocite{couvreur1997algorithm} \nocite{dempster1977maximum} \nocite{buhlmann2006course} \nocite{redner1984mixture}
%
\bibliographystyle{plain}
\bibliography{refs}
\end{document}